\long\def\symbolfootnote[#1]#2{\begingroup\def\thefootnote{\fnsymbol{footnote}}
\footnote[#1]{#2}\endgroup}
\newcommand{\N}{\mathbb{N}}
\newcommand{\R}{\mathbb{R}}
\newcommand{\C}{\mathbb{C}}
\newcommand{\Manoa}{M\=anoa}
\newcommand{\Hawaii}{Hawai\kern.05em`\kern.05em\relax i}
\newcommand{\supp}{\text{supp}}
\newcommand{\diam}{\text{diam}}
\theoremstyle{plain} \newtheorem*{question}{Questions}
\theoremstyle{remark} \newtheorem*{bgrem}{Remark}
\theoremstyle{definition} \newtheorem{adef}{Definition}[section]
\theoremstyle{definition} \newtheorem{ula}[adef]{Definition}
\theoremstyle{plain} \newtheorem{blem}[adef]{Lemma}
\theoremstyle{plain} \newtheorem{bmulem}[adef]{Lemma}
\theoremstyle{plain} \newtheorem{bbmulem}[adef]{Lemma}
\theoremstyle{definition} \newtheorem{bdef}[adef]{Definition}
\theoremstyle{plain} \newtheorem{bci}[adef]{Lemma}
\theoremstyle{definition} \newtheorem{propa}{Definition}[section]
\theoremstyle{plain} \newtheorem{ab}[propa]{Proposition}
\theoremstyle{definition} \newtheorem{cedef}[propa]{Definition}
\theoremstyle{plain} \newtheorem{osthe}[propa]{Theorem}
\theoremstyle{plain} \newtheorem{ceb}[propa]{Corollary}
\theoremstyle{definition} \newtheorem{mspdef}[propa]{Definition}
\theoremstyle{remark} \newtheorem{msprem}[propa]{Remark}
\theoremstyle{plain} \newtheorem{mspb}[propa]{Theorem}
\theoremstyle{definition} \newtheorem{onldef}[propa]{Definition}
\theoremstyle{remark} \newtheorem{onlrem}[propa]{Remark}
\theoremstyle{plain} \newtheorem{salem}[propa]{Lemma}
\theoremstyle{plain} \newtheorem{onlb}[propa]{Theorem}
\theoremstyle{definition} \newtheorem{sogdef}{Definition}[section]
\theoremstyle{plain} \newtheorem{expthe}[sogdef]{Theorem}
\theoremstyle{plain} \newtheorem{cocom}[sogdef]{Corollary}
\theoremstyle{definition} \newtheorem{boxdef}[sogdef]{Definition}
\theoremstyle{plain} \newtheorem{boxthe}[sogdef]{Theorem}
\theoremstyle{remark} \newtheorem{cubesrem}[sogdef]{Remark}
\theoremstyle{definition} \newtheorem{cg}{Definition}[section]
\theoremstyle{definition} \newtheorem{equiva}[cg]{Definition}
\theoremstyle{definition} \newtheorem{quantver}[cg]{Definition}
\theoremstyle{definition} \newtheorem{quantamen}[cg]{Definition}
\theoremstyle{definition} \newtheorem{fad}[cg]{Definition}
\theoremstyle{plain} \newtheorem{wmspamen}[cg]{Lemma}
\theoremstyle{plain} \newtheorem{amenthe}[cg]{Theorem}
\theoremstyle{plain} \newtheorem{nocor}[cg]{Corollary}
\theoremstyle{remark} \newtheorem{cerem}[cg]{Remark}
\author{Jacek Brodzki, Graham Niblo, J\'{a}n \v{S}pakula, Rufus Willett and \\ Nick Wright}
\title{Uniform Local Amenability}
\begin{document}

\maketitle

\begin{abstract}
The main results of this paper show that various coarse (`large scale') geometric properties are closely related.  In particular, we show that property A implies the operator norm localisation property, and thus that norms of operators associated to a very large class of metric spaces can be effectively estimated.

The main tool is a new property called uniform local amenability.  This property is easy to negate, which we use to study some `bad' spaces.  We also generalise and reprove a theorem of Nowak relating amenability and asymptotic dimension in the quantitative setting.
\end{abstract}

\symbolfootnote[0]{The first, second and fifth authors were supported by EPSRC grant EP/F031947/1.  The second author was supported by a Leverhulme Research Fellowship.  The third author was supported by the Deutsche
Forschungsgemeinschaft (SFB 878).  The fourth author was supported by the U.S. National Science Foundation.}

\section{Introduction}

The fundamental difficulty of computing operator norms arises in many areas of functional analysis.  In the context of metric geometry, it arises in the study of Roe algebras of finite propagation operators and is important in higher index theory \cite{Roe:1996dn} and theoretical physics \cite{Georgescu:2011cr}, among other places.  In higher index theory, the most important examples of metric spaces are often discrete groups.

The operator norm localisation property ($ONL$), which was introduced by Chen--Tessera--Wang--Yu in \cite{Chen:2008so}, provides a powerful tool for localising the problem.  It has been used to compute trace invariants and for other purposes in work on operator algebras and Baum-Connes type conjectures \cite{Guentner:2008gd,Willett:2010ud,Willett:2010zh,Spakula:2011bs}.

Here we show that $ONL$ holds for any bounded geometry metric space, and in particular any countable discrete group, which satisfies Yu's property A.  It is therefore more or less universally held: very few metric spaces are known which do not satisfy property A.  This generalises results of \cite{Chen:2008so,Chen:2009fk,Guentner:2008gd}, and combined with \cite{Dadarlat:2007qy}, it also reproves results from \cite{Chen:2008uq}.

The main technical tool used in the proof is the introduction of a property $ULA$ (`uniform local amenability') and its measure theoretic counterpart $ULA_\mu$; these should be viewed as local versions of Block-Weinberger amenability \cite{Block:1992qp}.  The additional properties of metric sparsification ($MSP$) and coarse embeddability ($CE$) have also been  studied in this context and we take the opportunity to record the relationships between these properties that were known to us when this paper was completed \eqref{diag}; these relationships are mainly new, and are of interest in their own right.

\begin{equation}\label{diag}
\xymatrix{  & CE & & ULA  \\ A \ar@{=>}[r]_{1~~~} & ULA_\mu \ar@{=>}[u]^{3} \ar@{=>}[urr]^{2} \ar@{<=>}_{4}[r] & MSP \ar@{=>}[r]_{5} & ONL \ar@{=>}[u]^{6} }.
\end{equation}
\begin{enumerate}
\item Proposition \ref{ab} below.  
\item Lemma \ref{bbmulem} below. The converse is open.
\item Corollary \ref{ceb} below.  The converse is false: see Corollary \ref{cocom}.
\item Theorem \ref{mspb} below.
\item This is proved in \cite[Section 4]{Chen:2008so}.  
\item Theorem \ref{onlb} below.  The converse is open.
\end{enumerate}

Of course, 2 follows from 4, 5 and 6, but we record it separately for the sake of the Remark at the end of this introduction.
Note in particular that this implies that all amenable groups have $MSP$ and $ONL$, a well-known open problem in the theory.  We also prove that for a box space, the properties $A$, $MSP$, $ULA$, $ULA_\mu$ and $ONL$ are equivalent.

Since this paper was completed, we learnt of a beautiful result of Hiroki Sako \cite{Sako:2012fk}, that $A$ and $ONL$ are in fact equivalent in general (for bounded geometry metric spaces).  It follows from this and our results that in fact the properties $A$, $MSP$, $ULA_\mu$, $ONL$ appearing in figure \eqref{diag} are all equivalent; combining our results with Sako's, we get the following quite satisfactory picture

$$
\xymatrix{  & CE & & ULA \\ A \ar@{<=>}[r] & ULA_\mu \ar@{=>}[u] \ar@{<=>}[r] & MSP \ar@{<=>}[r] & ONL  \ar@{=>}[u] }.
$$
Note that this gives an affirmative answer to Question 1 from \cite[page 1510]{Chen:2008so}.  Sako's methods are operator algebraic, using analysis of uniform Roe algebras, whereas ours are more purely coarse geometric; in particular, the two proofs are quite different.  It might be interesting to give a more coarse geometric proof of the implication $ONL\Rightarrow A$ from Sako's paper.

We leave the following questions open.

\begin{question}\label{question}
Are the properties $ONL$, $ULA$, $A$, $ULA_\mu$, $MSP$ all equivalent?  Does $ULA$ imply $CE$?\footnote{It follows from Corollary \ref{cocom} below that $CE$ cannot imply $ULA$.}
\end{question} 

Apart from being useful to prove the implications above, $ULA$ and $ULA_\mu$ have a significant advantage over the other properties in figure \eqref{diag}: it seems to be easier to check that they fail.  This allows us to give simple proofs that expanders and sequences of graphs with `large' girth do not have $ULA$.  In particular, this gives new examples of spaces without $ONL$, and reproves and generalises the main results of \cite{Willett:2010kx} and \cite{Khukhro:2011tw}.  Note also that it follows from our results, Sako's theorem, and an example of Arzhantseva--Guentner--\v{S}pakula \cite{Arzhantseva:2011vn} that $CE$ is strictly weaker than all the other properties in figure \eqref{diag}, apart possibly from $ULA$.  The relative ease with which $ULA$ and $ULA_\mu$ can be falsified may also play an important role in the construction of new non-exact groups (and more generally, metric spaces without property A), a task which to date has proved very difficult.

Finally, we look briefly at the quantitative aspects of the theory, using our ideas to give a new and more general proof of a theorem of Nowak \cite[Theorem 6.1]{Nowak:2007vn} relating quantitative versions of asymptotic dimension to quantitative versions of amenability.  There seems to be more that can be said here: in particular, we sketch an idea for constructing more examples of spaces with $CE$ but not $A$.

\begin{bgrem}\label{bgrem}
Throughout this piece, we make a blanket assumption that all metric spaces are of bounded geometry, and the implications above are in general only known to be valid under this assumption.  As some readers may be interested, we record whether the known proof of each implication in figure \eqref{diag} requires bounded geometry (where in each case, $ULA_\mu$ and $ULA$ are to be understood in the `set' definition rather than the `function' definition -- see Lemmas \ref{bmulem} and \ref{blem} below) : (1) yes; (2) no; (3) no; (4) no; (5) no; (6) yes.  We note that Sako's proof that $ONL$ is equivalent to $A$ uses bounded geometry in both directions.
\end{bgrem}

\subsection*{Outline of the paper}

In Section \ref{abbsec} below, we define $ULA$ and $ULA_\mu$, and discuss some basic properties. In Section \ref{mspsec} we recall the definitions of $A$, $CE$, $MSP$ and $ONL$, and fill in all the new implications in figure \eqref{diag}.  

The last two sections explore these properties.  In section \ref{countersec} we prove that expanders and sequences of graphs with large girth do not have property $ULA$, and that all the properties $A$, $ULA_\mu$, $ULA$, $MSP$, $ONL$ are equivalent for a box space.  Finally, in Section \ref{quantsec} we briefly discuss quantitative versions of our `local amenability' properties; as mentioned above, the main result is a generalisation of a theorem of Nowak.

\subsection*{Notation and conventions}

  If $X$ is a metric space, $x\in X$ and $E\subseteq X$, we use the following conventions.
\begin{align*}
B(x;R) & :=\{y\in X~|~d(x,y)\leq R\} \\
E^c & :=\{y\in X~|~y\not\in E\} \\
N_R(E) & :=\{y\in X~|~d(y,E)\leq R\} \\
\partial_RE &:= N_R(E) \backslash E.
\end{align*}

A metric space $X$ is said to be \emph{bounded geometry} if for all $R>0$ there exists $N_R\in\N$ such that $|B(x;R)|\leq N_R$.  Throughout, we say `$X$ is a space' as shorthand for `$X$ is a bounded geometry metric space'. Note that almost everything in this piece would go through if we only worked with metric spaces that are coarsely equivalent to some bounded geometry metric space (thus for example many manifolds).

A map between metric spaces $f:X\to Y$ is called a \emph{coarse embedding} if there exist non-decreasing functions $\rho_+,\rho_-:[0,\infty)\to[0,\infty)$ such that $\rho_-(t)\to \infty$ as $t\to\infty$ and 
$$
\rho_-(d_X(x_1,x_2))\leq d_Y(f(x_1),f(x_2))\leq \rho_+(d_X(x_1,x_2))
$$
for all $x_1,x_2$ in $X$.  $\rho_-$ and $\rho_+$ are called \emph{distortion functions} associated to $f$.  $f$ is called a \emph{coarse equivalence} if it is a coarse embedding, and if in addition there exists $C\geq 0$ such that for all $y\in Y$, $d(f(X),y)\leq C$.

\section{Uniform local amenability}\label{abbsec}

In this section we introduce our versions of `uniform local amenability', $ULA$ and $ULA_\mu$ below.  in order to motivate the definitions we recall the Block--Weinberger definition of amenability for a metric space \cite[Section 3]{Block:1992qp}.

\begin{adef}\label{adef}
A space $X$ is called \emph{amenable} if for all $R,\epsilon>0$ there exists a finite subset $E$ of $X$ (called a \emph{F\o lner set}) such that
$$
|\partial_R E|<\epsilon|E|.
$$
\end{adef}

Although this property (and its negation) are very useful in some contexts, it has shortcomings:  one of these is that it does not pass to subspaces.  As an attempt to rectify this, the following definition is very natural.

\begin{ula}\label{ula}
Let $X$ be a space.  $X$ is said to be \emph{uniformly locally amenable} if for all $R,\epsilon>0$ there exists $S>0$ such that for any finite subset $F$ of $X$ there exists $E\subseteq X$ such that $\diam(E)\leq S$ and 
$$
|\partial_R E\cap F|<\epsilon |E\cap F|.
$$
\end{ula}

Essentially, the definition says that all finite subsets of $X$ must be amenable, in such a way that amenability is seen by F\o lner sets of \emph{uniform} size.  Any finite metric space is of course trivially amenable in its own right: the non-triviality comes from requiring uniformity.  

The property in Definition \ref{ula} is equivalent to $ULA$ from Definition \ref{bdef} below; that definition is more convenient, but requires some preliminary lemmas. We will also need a stronger\footnote{A priori anyway: the two properties could be equivalent.} version of uniform local amenability, where probability measures rather than finite subsets are used to `localize'.  This is $ULA_\mu$, which is also introduced below.

The next two technical lemmas provide groundwork for these definitions.

\begin{bmulem}\label{bmulem}
Let $X$ be a space.  The following two properties are equivalent.
\begin{enumerate}
\item For all $R,\epsilon>0$ there exists $S>0$ such that for all probability measures $\mu$ on $X$ there exists a function $\phi\in l^1(X)$ such that
\begin{itemize}
\item $\text{diam}(\text{supp}(\phi))\leq S$;
\item the following `variational inequality' holds
\begin{equation}\label{var}
\sum_{x\in \supp(\mu)}\mu(x)\sum_{\substack{y\in \supp(\mu)\\ d(x,y)\leq R}}|\phi(x)-\phi(y)|<\epsilon \sum_{x\in \supp(\mu)} \mu(x)|\phi(x)|.
\end{equation}
\end{itemize}
\item For all $R,\epsilon>0$ there exists $S>0$ such that for all probability measures $\mu$ on $X$ there exists a finite set $E\subseteq X$ such that
\begin{itemize}
\item $\text{diam}(E)\leq S$;
\item the following inequality holds
$$
\mu(\partial_R E)<\epsilon \mu(E).
$$
\end{itemize}
\end{enumerate}
\end{bmulem}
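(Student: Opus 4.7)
The plan is to prove the two directions separately, using a layer-cake argument for $(1)\Rightarrow(2)$ and a smeared-measure trick for $(2)\Rightarrow(1)$.

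For $(1)\Rightarrow(2)$, I would first replace $\phi$ by $|\phi|\cdot\chi_{\supp\mu}$; neither side of the variational inequality changes (the LHS sums only involve $x,y\in\supp\mu$, and $\bigl||\phi|(x)-|\phi|(y)\bigr|\le|\phi(x)-\phi(y)|$), so I may assume $\phi\ge 0$ and $\supp\phi\subseteq\supp\mu$. Writing $\phi=\int_0^\infty\chi_{E_t}\,dt$ for the superlevel sets $E_t:=\{\phi>t\}$ and exchanging sum and integral, both sides of the variational inequality become integrals in $t$ of the analogous quantities for $\chi_{E_t}$. Because $\supp\phi\subseteq\supp\mu$, every $x\in\partial_R E_t\cap\supp\mu$ witnesses some $y\in E_t\subseteq\supp\mu$ with $d(x,y)\le R$, so the LHS-integrand dominates $\mu(\partial_R E_t)$, while the RHS-integrand equals $\epsilon\mu(E_t)$. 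Hence $\int_0^\infty\mu(\partial_R E_t)\,dt<\epsilon\int_0^\infty\mu(E_t)\,dt$, forcing some $t>0$ with $\mu(\partial_R E_t)<\epsilon\mu(E_t)$; this $E_t$ is finite (as $\phi\in l^1$) and $\diam(E_t)\le\diam(\supp\phi)\le S$.

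For the harder direction $(2)\Rightarrow(1)$, the naive choice $\phi=\chi_E$ for $E$ coming from $(2)$ does \emph{not} work directly. Splitting the LHS into contributions from pairs $(x,y)$ with $x\in E\cap\supp\mu$, $y\in\supp\mu\setminus E$ and from the symmetric pairs, bounded geometry bounds the second by $N_R\mu(\partial_R E)$, but the first is weighted by $\mu(x)$ at \emph{interior} points of $E$ and can be as large as $\mu(E)$ if $\mu$ concentrates on a point whose $R$-ball hits $\partial_R E\cap\supp\mu$. To get past this, I would apply $(2)$ not to $\mu$ but to the smeared probability measure $\tilde\nu:=\nu/\|\nu\|$, where $\nu(y):=\mu(B(y;R))$; bounded geometry gives $\|\nu\|\in[1,N_R]$. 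Choosing $\epsilon''$ of order $\epsilon/N_R^2$ and invoking $(2)$ for $\tilde\nu$ with parameters $(R,\epsilon'')$ produces a finite set $E$, with $\diam(E)$ bounded only in terms of $R$ and $\epsilon$, satisfying $\nu(\partial_R E)<\epsilon''\nu(E)$.

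Setting $\phi:=\chi_E$, a Fubini calculation bounds the problematic contribution by $\sum_{y\in\partial_R E}\mu(B(y;R))=\nu(\partial_R E)$, and bounded geometry bounds the other contribution by $N_R\mu(\partial_R E)\le N_R\nu(\partial_R E)$. In the reverse direction $\nu(E)\le N_R\mu(N_R(E))=N_R(\mu(E)+\mu(\partial_R E))$, and feeding $\mu(\partial_R E)\le\nu(\partial_R E)<\epsilon''\nu(E)$ back in gives $\nu(E)\lesssim N_R\mu(E)$ once $\epsilon''N_R$ is small. Assembling, the LHS is at most $(N_R+1)\epsilon''\nu(E)\lesssim N_R^2\epsilon''\mu(E)$ while the RHS equals $\epsilon\mu(E)$, so the choice $\epsilon''\sim\epsilon/N_R^2$ closes the argument. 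The main obstacle throughout is exactly this asymmetric interior-boundary bound: the set formulation $(2)$ applied to $\mu$ alone cannot see the mass sitting in the interior of $E$ which nevertheless pairs with boundary mass in the variational sum, and the smearing $\nu(y)=\mu(B(y;R))$ is precisely what converts that interior contribution into a boundary quantity for $\nu$ which $(2)$ controls.
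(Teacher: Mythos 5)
Your proof is correct. The direction $(1)\Rightarrow(2)$ is essentially the paper's argument: the paper decomposes a non-negative $\phi$ as a positive combination $\sum_i \frac{a_i}{|F_i|}\chi_{F_i}$ of characteristic functions of nested level sets and extracts a good index $i$ by averaging, which is exactly your layer-cake integral in discrete form; your explicit preliminary reduction to $\supp(\phi)\subseteq\supp(\mu)$ is a worthwhile addition, since it is what guarantees that each boundary point of a level set actually sees a partner $y$ inside the range $y\in\supp(\mu)$ of the inner sum (the paper's write-up is silent on this). The direction $(2)\Rightarrow(1)$ is where you genuinely diverge. You both correctly identify the same obstruction -- with $\phi=\chi_E$ the variational sum is weighted by $\mu(x)$ at interior points of $E$ adjacent to the boundary, which $\mu(\partial_R E)$ alone does not control -- but you resolve it by smearing the measure (applying $(2)$ at gauge $R$ to $\nu(y)=\mu(B(y;R))$ with $\epsilon''\sim\epsilon/N_R^2$, so that interior-adjacent mass becomes boundary mass of $\nu$), whereas the paper resolves it by thickening the set: it applies $(2)$ at gauge $2R$ with $\epsilon/N_R$ to $\mu$ itself, obtaining $E'$, and takes $E=N_R(E')$, so that both $\partial_R(E)$ and the inner collar $\partial_R(X\setminus E)$ sit inside $\partial_{2R}(E')$, whose $\mu$-measure is small. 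The two mechanisms are dual ways of making the hypothesis ``see'' an $R$-neighbourhood; the paper's costs a factor $N_R$ in $\epsilon$ and a factor $R$ in the diameter bound ($S+R$ at parameter $2R$), while yours costs a factor $N_R^2$ in $\epsilon$ but keeps the gauge at $R$ and the diameter at the value supplied by $(2)$. Your chain of estimates closes correctly: $\nu(E)>0$ together with $\nu(E)\leq N_R\mu(E)+N_R\epsilon''\nu(E)$ forces $\mu(E)>0$ once $\epsilon''<1/N_R$, so the strict inequality against $\epsilon\mu(E)$ is meaningful, and $S$ depends only on $R$ and $\epsilon$ as required.
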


The proof below is a standard argument based on that showing the `Reiter' formulation of amenability implies the `F\o lner' formulation: for the readers' convenience, we provide the details.

\begin{proof}
Assume the first condition holds.  Let $R,\epsilon>0$ be given, and let $\mu$ be a probability measure; we must find $S>0$ (which is independent of $\mu$) and $E$ as in the second condition.  Let $S$ and $\phi$ be as in the first condition; by replacing $\phi$ with $|\phi|$, we may assume that $\phi$ is non-negative.  Let $F_1=\supp(\phi)$ and let $F_1\supseteq F_2\supseteq \cdots \supseteq F_n$ be a sequence of (finite) subsets of $X$ such that we can write
$$
\phi=\sum_{i=1}^n \frac{a_i}{|F_i|}\chi_{F_i},
$$   
where each $a_i$ is a positive real number and $\chi_{F_i}$ is the characteristic function of $F_i$.   The variation inequality in line \eqref{var} above can then be rewritten
\begin{align*}
\sum_{i=1}^n\frac{a_i}{|F_i|} & \sum_{x\in \supp(\mu)}\mu(x)\sum_{\substack{y\in \supp(\mu)\\ d(x,y)\leq R}}|\chi_{F_i}(x)-\chi_{F_i}(y)| \\ & <\epsilon \sum_{i=1}^n\frac{a_i}{|F_i|}\sum_{x\in \supp(\mu)} \mu(x)\chi_{F_i}(x),
\end{align*}
from which it follows that for some $i$,
\begin{align*}
\sum_{x\in \supp(\mu)}& \mu(x)\sum_{\substack{y\in \supp(\mu)\\ d(x,y)\leq R}}|\chi_{F_i}(x)-\chi_{F_i}(y)| \\  &<\epsilon \sum_{x\in \supp(\mu)} \mu(x)\chi_{F_i}(x)=\epsilon \mu(F_i).
\end{align*}
On the other hand, the left-hand-side in the line above is at least $\mu(\partial_RF_i)$; it follows that $E:=F_i$ satisfies all the conditions in the second part of the Lemma.\\

Conversely, assume that the condition in the second part of the Lemma holds.  Let $R,\epsilon$, $\mu$ be as given.  Let $N_R$ be an absolute bound on the number of points in a ball of radius $R$, and let $S>0$ and $E'$ satisfy the conditions in the second part with respect to the data $2R,\epsilon/N_R,\mu$, so in particular
\begin{equation}\label{setineq}
\mu(\partial_{2R}E')<\frac{\epsilon}{N_R} \mu(E').
\end{equation}
Let $E=N_R(E')=E'\cup\partial_R(E')$.  Then $\partial_R(E)\cup \partial_R(X\setminus E)\subseteq \partial_{2R}(E')$ so 
\begin{equation}\label{setineq2}
\mu(\partial_R(E)\cup \partial_R(X\setminus E))\leq \mu( \partial_{2R}(E'))< \frac{\epsilon}{N_R}\mu(E')\leq \frac{\epsilon}{N_R}\mu(E).
\end{equation}

Let now $\phi$ be the characteristic function of $E$.  We have
\begin{align*}
& \sum\limits_{x\in\supp(\mu)} \mu(x) \sum\limits_{\substack{y\in \supp(\mu) \\ d(x,y)\leq R}} |\phi(x)-\phi(y) | \\ &=
\sum\limits_{x\in E} \mu(x) \sum\limits_{\substack{y\in \partial_R(E) \\ d(x,y)\leq R}} |\phi(x)-\phi(y)| +\sum\limits_{x\in X\setminus E} \mu(x) \sum\limits_{\substack{y\in \partial_R(X\setminus E)  \\ d(x,y)\leq R}} |\phi(x)-\phi(y)|\\ & =
\sum\limits_{x\in \partial_R(X\setminus E)} \mu(x) \sum\limits_{\substack{y\in \partial_R(E) \\ d(x,y)\leq R}} |\phi(x)-\phi(y) | +\sum\limits_{x\in \partial_R(E)} \mu(x) \sum\limits_{\substack{y\in \partial_R(X\setminus E) \\ d(x,y)\leq R}} |\phi(x)-\phi(y)|\\
& \leq N_R\sum\limits_{x\in \partial_R(X\setminus E)\cup \partial_R(E)}\mu(x)\\ &\leq N_R\mu( \partial_R(X\setminus E)\cup \partial_R(E));
\end{align*}
combining this with line \eqref{setineq2} gives
$$
\sum\limits_{x\in\supp(\mu)} \mu(x) \sum\limits_{\substack{y\in \supp(\mu) \\ d(x,y)\leq R}} |\phi(x)-\phi(y) |< \epsilon\mu(E)=\epsilon\sum_{x\in X}\mu(x)|\phi(x)|.
$$
The support of $\phi$ is just $E$, which is a subset of $\supp(\mu)$ of diameter at most $S+R$; we are done.
\end{proof}

The following Lemma can be proved in the same way as Lemma \ref{bmulem}.

\begin{blem}\label{blem}
Let $X$ be a space.  The following two properties are equivalent.
\begin{enumerate}
\item For all $R,\epsilon>0$ there exists $S>0$ such that for all finite subsets $F$ of $X$ there exists a function $\phi\in l^1(X)$ such that
\begin{itemize}
\item $\text{diam}(\text{supp}(\phi))\leq S$;
\item the following `variational inequality' holds
$$
\sum_{x\in F}\sum_{\substack{y\in F \\ d(x,y)\leq R}}|\phi(x)-\phi(y)|<\epsilon \sum_{x\in F}|\phi(x)|.
$$
\end{itemize}
\item For all $R,\epsilon>0$ there exists $S>0$ such that for all finite subsets $F$ of $X$ there exists a finite set $E\subseteq X$ such that
\begin{itemize}
\item $\text{diam}(E)\leq S$;
\item the following inequality holds
$$
|\partial_R E\cap F|<\epsilon |E\cap F|. \eqno \qed
$$
\end{itemize}
\end{enumerate} 
\end{blem}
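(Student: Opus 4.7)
The plan is to mimic the proof of Lemma \ref{bmulem} almost verbatim, with the probability measure $\mu$ replaced by (counting measure on) the finite set $F$: under the substitution $\mu(x)\mapsto\chi_F(x)$, each expression $\sum_{x\in \supp(\mu)}\mu(x)g(x)$ becomes $\sum_{x\in F}g(x)$ and each quantity $\mu(A)$ becomes $|A\cap F|$. The combinatorial content of the argument is unchanged.

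For the implication (1) $\Rightarrow$ (2), I would begin with $R,\epsilon>0$ and a finite set $F$, take the $\phi$ provided by (1), and replace it by $|\phi|$ so that $\phi\geq 0$. Since both sides of the variational inequality depend only on $\phi|_F$, we may further assume $\supp(\phi)\subseteq F$. Next I would apply the layer-cake decomposition $\phi=\sum_{i=1}^n(a_i/|F_i|)\chi_{F_i}$ with nested $F_1\supseteq \cdots\supseteq F_n\subseteq F$, substitute into the variational inequality, and use the pigeonhole principle to produce some $i$ for which
$$
\sum_{x\in F}\sum_{\substack{y\in F\\ d(x,y)\leq R}}|\chi_{F_i}(x)-\chi_{F_i}(y)|<\epsilon|F_i\cap F|.
$$
Because $F_i\subseteq F$, every $x\in \partial_R F_i\cap F$ has a witness $y\in F_i\subseteq F$ at distance at most $R$, so the left-hand side is at least $|\partial_R F_i\cap F|$. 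Then $E:=F_i$ satisfies the required inequality, and $\diam(E)\leq \diam(\supp(\phi))\leq S$.

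For the converse (2) $\Rightarrow$ (1), let $N_R$ be the bounded-geometry bound on $|B(x;R)|$, and apply (2) to the data $2R$ and $\epsilon/N_R$ to obtain an $S>0$ and, for each finite $F$, a set $E'$ of diameter at most $S$ with $|\partial_{2R}E'\cap F|<(\epsilon/N_R)|E'\cap F|$. Setting $E:=N_R(E')$ and $\phi:=\chi_E$, the inclusion $\partial_R E\cup\partial_R(X\setminus E)\subseteq\partial_{2R}E'$ together with $|E'\cap F|\leq|E\cap F|$ gives $|F\cap(\partial_R E\cup\partial_R(X\setminus E))|<(\epsilon/N_R)|E\cap F|$. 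Each nonzero term $|\chi_E(x)-\chi_E(y)|$ in the variational sum forces $x$ to lie in this boundary region, and for each such $x$ there are at most $N_R$ points $y\in F$ with $d(x,y)\leq R$; hence the variational left-hand side is bounded by $N_R\cdot(\epsilon/N_R)|E\cap F|=\epsilon\sum_{x\in F}|\phi(x)|$. The support of $\phi$ has diameter at most $S+2R$, depending only on $R$ and $\epsilon$.

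No substantive obstacle arises: the argument of Lemma \ref{bmulem} is purely combinatorial and indifferent to whether one weights by $\mu(x)$ or by $\chi_F(x)$, and the only essential appeal to bounded geometry is through the constant $N_R$ in the converse direction, exactly as before. The mild technical point to keep in mind is the reduction $\supp(\phi)\subseteq F$ in the first direction, which ensures that the boundary inequality is genuinely witnessed inside $F$.
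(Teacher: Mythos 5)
Your proof is correct and is exactly what the paper intends: the paper gives no separate argument for this lemma, stating only that it ``can be proved in the same way as Lemma \ref{bmulem},'' and your substitution of counting measure on $F$ for $\mu$ carries that argument through faithfully in both directions (including the correct handling of the reduction $\supp(\phi)\subseteq F$ and the $N_R$ bound in the converse).
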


\begin{bdef}\label{bdef}
We call the property appearing in Lemma \ref{bmulem} above $ULA_\mu$, or \emph{uniform local amenability}.  We call the property appearing in Lemma \ref{blem} $ULA$.
\end{bdef}

\begin{bci}\label{bci}
Let $Y$ be a space with $ULA$ (respectively, $ULA_\mu$), and $X$ be a space such that there exists a coarse embedding $f:X\to Y$.   Then $X$ has $ULA$ (resp. $ULA_\mu$).

In particular, $ULA$ and $ULA_\mu$ are coarse invariants.
\end{bci}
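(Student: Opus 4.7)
The plan is to push the localising data on $X$ forward through $f$, apply the corresponding property in $Y$, and pull the resulting set back. Let $\rho_-,\rho_+$ be distortion functions for $f$ and set $R' := \rho_+(R)$ throughout. Two preliminary observations will control the pull-back. First, since $\rho_-$ is non-decreasing with $\rho_-(t) \to \infty$, any subset $E' \subseteq Y$ of diameter at most $S_Y$ pulls back to $f^{-1}(E') \subseteq X$ of diameter at most some $S_X$ depending only on $S_Y$ and $\rho_-$; in particular, by bounded geometry of $X$, the fibres of $f$ have cardinality bounded by some integer $M$ depending only on $f$ and $X$. Second, writing $E := f^{-1}(E')$, one checks directly that $\partial_R E \subseteq f^{-1}(\partial_{R'} E')$: if $x \in \partial_R E$ and $y \in E$ witnesses $d_X(x,y) \leq R$, then $f(y) \in E'$, $f(x) \notin E'$, and $d_Y(f(x),f(y)) \leq R'$.

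For $ULA_\mu$, given $R,\epsilon > 0$ and a probability measure $\mu$ on $X$, let $\nu := f_*\mu$ and apply the $ULA_\mu$ property of $Y$ at parameters $(R',\epsilon)$ to obtain a suitable $E' \subseteq Y$. Setting $E := f^{-1}(E')$, the observations above give $\diam(E) \leq S_X$ and $\mu(\partial_R E) \leq \nu(\partial_{R'} E') < \epsilon\nu(E') = \epsilon\mu(E)$, as required. For $ULA$, given $R,\epsilon > 0$ and a finite set $F \subseteq X$, apply the $ULA$ property of $Y$ to the finite set $F' := f(F)$ at parameters $(R', \epsilon/M)$, producing $E' \subseteq Y$ with $|\partial_{R'} E' \cap F'| < (\epsilon/M)|E' \cap F'|$. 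Setting $E := f^{-1}(E')$, the fibre bound yields $|E \cap F| \geq |E' \cap F'|$ (since $f$ maps $E \cap F$ onto $E' \cap F'$) and $|\partial_R E \cap F| \leq M|\partial_{R'} E' \cap F'|$ (since $f$ maps $\partial_R E \cap F$ into $\partial_{R'} E' \cap F'$ with fibres of size $\leq M$), and these inequalities combine to give $|\partial_R E \cap F| < \epsilon|E \cap F|$.

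The only real subtlety is the non-injectivity of $f$ in the $ULA$ case, where the fibre factor $M$ must be absorbed into a smaller choice of $\epsilon$ in $Y$; in the $ULA_\mu$ case multiplicities are automatically encoded in the pushforward measure. The final assertion (coarse invariance of both properties) then follows immediately by applying the main statement both to a coarse equivalence and to any coarse inverse.
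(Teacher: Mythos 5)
Your argument is correct and follows essentially the same route as the paper: push the finite set (resp.\ measure) forward, apply the property in $Y$ with the error parameter divided by the uniform fibre bound (resp.\ with no correction, since the pushforward measure absorbs multiplicities), and pull the resulting set back, controlling its diameter via $\rho_-$ and the boundary via $\rho_+$. The only cosmetic difference is that the paper intersects the pulled-back set with $F$ and first replaces $E'$ by $E'\cap f(F)$, which changes nothing of substance.
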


\begin{proof}
Let $f:X\to Y$ be a coarse embedding between spaces and let $\rho_{\pm}:[0,\infty)\to[0,\infty)$ be associated distortion functions. We assume (as we may) that $\rho_-$ has the following property: for all $t_1<t_2$, if $\rho_-(t_2)>0$, then $\rho_-(t_1)<\rho_-(t_2)$. Furthermore, denote $D=\sup_{y\in Y}|f^{-1}(y)|$.

We are going to show that if $Y$ has $ULA$ (or $ULA_\mu$), then $X$ does as well. We shall use the second formulation from Lemmas \ref{blem} and \ref{bmulem}. We give the argument for $ULA$.

Given $R,\epsilon>0$, declare $S=\rho^{-1}_-(S')$, where $S'$ comes from $ULA$ for $Y$ with parameters $\rho_+(R)$ and $\epsilon/D$; we now check that this $S$ satisfies the requirements.  Given a finite $F\subseteq X$, let $E'\subseteq Y$ be the set whose existence is guaranteed by $ULA$ for $Y$ with respect to the subset $f(F)$ and parameters $\rho_+(R)$ and $\epsilon/D$, i.e.\ $E'$ satisfies $\diam(E')\leq S'$ and 
$$|\partial_{\rho_+(R)}E'\cap f(F)|< \frac{\epsilon}D|E'\cap f(F)|.$$  
Passing to $E'\cap f(F)$, we may assume that $E'\subseteq f(F)$.

Denote $E=f^{-1}(E')\cap F$. Observe that if $0<d(x,E)\leq R$ for some $x\in F$, then $0<d(f(x),E')\leq \rho_+(R)$. Moreover, there are at most $D$ points whose image under $f$ coincides with $f(x)$. Consequently, $$
|\partial_RE\cap F|\leq D|\partial_{\rho_+(R)}E'\cap f(F)|<\epsilon|E'|\leq \epsilon|E|=\epsilon |E\cap F|.
$$
Finally, note that $\diam(E)\leq \rho_-^{-1}(\diam(E'))\leq S$. This finishes the argument for property $ULA$.

The argument for $ULA_\mu$ can be done along the same lines, using push-forward measures.
\end{proof}

The following Lemma is immediate: indeed, one should think of $ULA$ as being the special case of $ULA_\mu$ where the probability measure must be a normalised characteristic function. 

\begin{bbmulem}\label{bbmulem}
$ULA_\mu$ implies $ULA$. \qed
\end{bbmulem}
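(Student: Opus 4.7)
The plan is to apply $ULA_\mu$ directly to a particular choice of probability measure, namely the normalised characteristic function of the given finite set. This is exactly the intuition the authors flag just before the statement.

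More concretely, suppose $X$ has $ULA_\mu$ and fix $R,\epsilon>0$. Let $S>0$ be the constant provided by $ULA_\mu$ (in the second formulation from Lemma \ref{bmulem}) for this $R,\epsilon$. Given an arbitrary finite subset $F\subseteq X$, I would define the probability measure
\[
\mu_F := \frac{1}{|F|}\chi_F,
\]
so that for any subset $A\subseteq X$ one has $\mu_F(A) = |A\cap F|/|F|$. Applying $ULA_\mu$ to $\mu_F$ produces a finite set $E\subseteq X$ with $\diam(E)\leq S$ and $\mu_F(\partial_R E)<\epsilon\,\mu_F(E)$. Multiplying through by $|F|$ turns this inequality into
\[
|\partial_R E\cap F|<\epsilon\,|E\cap F|,
\]
which is exactly the conclusion required by the second formulation of $ULA$ in Lemma \ref{blem}. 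Since the $S$ obtained depends only on $R$ and $\epsilon$ (not on $F$), this establishes $ULA$.

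There is no real obstacle here: the argument is a direct unpacking of definitions, and indeed it works because the $ULA_\mu$ condition is stated uniformly over \emph{all} probability measures, a class which includes the normalised indicator measures that witness $ULA$. The only thing to double-check is that the set formulation (rather than the function formulation) is being used on both sides, so that the translation is literally an identity of counting quantities; this is already arranged by the Lemmas \ref{bmulem} and \ref{blem} cited in the statement.
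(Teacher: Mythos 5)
Your proof is correct and is exactly the argument the paper has in mind: the paper treats this lemma as immediate, noting that $ULA$ is the special case of $ULA_\mu$ in which the probability measure is the normalised characteristic function $\chi_F/|F|$, which is precisely the measure you use. The translation between $\mu_F(\partial_R E)<\epsilon\,\mu_F(E)$ and $|\partial_R E\cap F|<\epsilon\,|E\cap F|$, and the observation that $S$ depends only on $R,\epsilon$, are exactly as required.
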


\section{Relationship of uniform local amenability with other properties}\label{mspsec}

In this section we look at the relationships between the uniform local amenability properties we introduced in the last section, and some other coarse properties: property A, coarse embeddability, the metric sparsification property, and operator norm localisation.

\subsection*{Property A}

The following definition of property A is due to Higson--Roe \cite[Lemma 3.5]{Higson:2000dp}.  

\begin{propa}\label{propa}
Let $X$ be a space, and $\text{Prob}(X)$ denote the simplex of probability measures on $X$, considered as a subset of $l^1(X)$.
We say that $X$ has \emph{property A} if for all $R,\epsilon>0$ there exists $S>0$ such that there exists $\xi:X\to \text{Prob}(X)$, denoted $x\mapsto \xi_x$, such that
\begin{itemize}
\item for all $x,y\in X$ with $d(x,y)\leq R$, $\|\xi_x-\xi_y\|<\epsilon$;
\item for all $x\in X$, $\xi_x$ in supported in $B(x;S)$.
\end{itemize}
\end{propa}

\begin{ab}\label{ab}
Property $A$ implies $ULA_\mu$.
\end{ab}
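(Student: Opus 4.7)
The plan is to use the variational form of $ULA_\mu$ from part (1) of Lemma \ref{bmulem}, and to construct the required function $\phi$ directly from the family of probability measures provided by property $A$, via an averaging trick that selects a good `centre' $z$.

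Given $R,\epsilon>0$, I would let $N_R$ be a bound on the cardinality of a ball of radius $R$ in $X$, set $\tilde\epsilon:=\epsilon/(2N_R)$, and apply property $A$ with parameters $R$ and $\tilde\epsilon$ to obtain some $S_0>0$ and a map $x\mapsto\xi_x$ satisfying $\supp(\xi_x)\subseteq B(x;S_0)$ and $\|\xi_x-\xi_y\|_1<\tilde\epsilon$ whenever $d(x,y)\leq R$. Given any probability measure $\mu$ on $X$, for each $z\in X$ I define a candidate function $\phi_z\in l^1(X)$ by $\phi_z(x):=\xi_x(z)$. The support condition on $\xi_x$ forces $\phi_z(x)\neq 0$ only when $d(x,z)\leq S_0$, so $\supp(\phi_z)\subseteq B(z;S_0)$ has diameter at most $2S_0$ uniformly in $z$ and $\mu$; this will be the $S$ in the conclusion.

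The heart of the argument is to find some $z$ for which $\phi_z$ satisfies the variational inequality \eqref{var}. Writing
$$L(z):=\sum_{x\in\supp(\mu)}\mu(x)\sum_{\substack{y\in\supp(\mu)\\d(x,y)\leq R}}|\phi_z(x)-\phi_z(y)|,\quad M(z):=\sum_{x\in\supp(\mu)}\mu(x)\phi_z(x),$$
the goal is $L(z)<\epsilon M(z)$ for some $z$. Summing over $z\in X$ and swapping orders, the identity $\sum_z|\xi_x(z)-\xi_y(z)|=\|\xi_x-\xi_y\|_1<\tilde\epsilon$ (valid whenever $d(x,y)\leq R$) together with the bounded geometry estimate $|B(x;R)|\leq N_R$ yield $\sum_z L(z)\leq\tilde\epsilon N_R=\epsilon/2$, while $\sum_z\xi_x(z)=1$ gives $\sum_z M(z)=1$. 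Hence $\sum_z\bigl(L(z)-\epsilon M(z)\bigr)\leq -\epsilon/2<0$, so at least one $z$ makes the summand strictly negative, and taking $\phi:=\phi_z$ for such a $z$ does the job.

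The main point requiring care, and the reason bounded geometry enters the argument (consistent with Remark \ref{bgrem}), is the loss of the factor $N_R$ in the averaging step: one must start by applying property $A$ with $\tilde\epsilon=\epsilon/(2N_R)$ rather than with $\epsilon$ itself. Once this is accounted for, the proof is essentially a direct reformulation of property $A$ in the language of $ULA_\mu$, using the Higson--Roe probability measures $\xi_x$ to produce the localising function automatically.
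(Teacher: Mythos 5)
Your proof is correct and takes essentially the same route as the paper's: slice the Higson--Roe measures at a point $z$ chosen by an averaging/pigeonhole argument, and take $\phi(x)=\xi_x(z)$. The only (harmless) difference is that you sum over all $z\in X$ and work with $\xi$ defined on all of $X$, whereas the paper first restricts to $\supp(\mu)$ using the folklore fact that subsets inherit property A uniformly; your version slightly streamlines this at the cost of an extra factor of $2$ in the choice of $\tilde\epsilon$.
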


\begin{proof}
Assume $X$ has $A$, and let $R,\epsilon>0$, and $\mu$ be a probability measure on $X$.   Write $F:=\text{supp}(\mu)$.  Let $N_R$ denote the maximal number of points in a ball of radius $R$ in $X$ (hence also in $F$).  As $A$ for $X$ implies that all subsets of $X$ have $A$ uniformly\footnote{i.e.\ for all $R,\epsilon>0$ there exists $S>0$ such that for any subset $E$ of $X$ there exists $\xi:E\to \text{Prob}(E)$ satisfying the conditions in Definition \ref{propa} for $R,\epsilon,S$ - this is folklore, following for example from the proof of \cite[Proposition 4.2]{Tu:2001bs}.}, there exists $S>0$ independently of $\mu$ and a function $\xi:F\to \text{Prob}(F)$ satisfying the conditions in Definition \ref{adef} for the parameters $R,\epsilon/N_R,S$.  Fixing $x\in F$ for the moment, we then have
$$
\sum_{y\in F,~d(x,y)\leq R}\|\xi_x-\xi_y\|<N_R\epsilon/N_R=\epsilon,
$$
whence (using the fact that $\mu$ and each $\xi_x$ is a probability measure)
$$
\sum_{x\in F}\mu(x)\sum_{y\in F,~d(x,y)\leq R}\|\xi_x-\xi_y\|<\epsilon\sum_{x\in F}\mu(x)\|\xi_x\|.
$$
Expanding the norms on both sides gives
\begin{align*}
\sum_{z\in F}\sum_{x\in F}\mu(x)\sum_{y\in F,~d(x,y)\leq R}|\xi_x(z)-\xi_y(z)|<\epsilon \sum_{z\in F}\sum_{x\in F}\mu(x)|\xi_x(z)|,
\end{align*}
whence there exists $z_0\in F$ such that
$$
\sum_{x\in F}\mu(x)\sum_{y\in F,~d(x,y)\leq R}|\xi_x(z_0)-\xi_y(z_0)|<\epsilon \sum_{x\in F}\mu(x)|\xi_x(z_0)|;
$$
defining $\phi:F\to\C$ by $\phi(x)=\xi_x(z_0)$ (and extending $\phi$ to all of $X$ by setting it to be zero outside of $F$), we are done.  
\end{proof}

\subsection*{Coarse embeddings into Hilbert space}

\begin{cedef}\label{cedef}
Let $X$ be a space.  We say that $X$ has property $CE$ if $X$ admits a coarse embedding into a Hilbert space.
\end{cedef}

The following theorem is an immediate consequence of a result of Ostrovskii \cite[Theorem 2]{Ostrovskiil:2009kl}.

\begin{osthe}\label{osthe}
Let $X$ be a space that does not have $CE$.  Then there exists $\epsilon>0$ and $R>0$ such that for all $S>0$ there exists a probability measure $\mu$ on $X$ such that for all $E\subseteq\supp(\mu)$ with $\diam(E)\leq S$ we have
$$
\mu(\partial_R E)\geq \epsilon \mu(E).
$$
\end{osthe}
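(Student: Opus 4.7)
The plan is to invoke Ostrovskii's Theorem 2 from \cite{Ostrovskiil:2009kl} essentially verbatim, treating our statement as a translation of his combinatorial criterion for failure of coarse embeddability into Hilbert space into the boundary language of this paper. Ostrovskii's result characterises failure of $CE$ by producing, for some fixed $R$ and $\epsilon$ depending only on $X$, finite probability measures $\mu$ on $X$ whose supports carry an isoperimetric-type obstruction at every prescribed (arbitrarily large) scale $S$: no subset $E$ of $\supp(\mu)$ with $\diam(E)\leq S$ has $R$-boundary of mass less than $\epsilon\mu(E)$. This is the content of our theorem once the notation is aligned.

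The work therefore breaks into two routine steps. First, apply Ostrovskii's theorem to $X$ to extract the universal constants $R$ and $\epsilon$, together with a witnessing probability measure $\mu$ for each $S>0$. Second, translate: identify Ostrovskii's boundary functional with $\partial_R E = N_R(E)\setminus E$, observe that his probability measures may be taken finitely supported (consistent with the blanket bounded geometry hypothesis), and confirm that his quantifier structure matches ours, namely that $R$ and $\epsilon$ are independent of $S$ while $\mu$ is allowed to depend on $S$.

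The only real obstacle is the bookkeeping of the quantifier alignment: one must verify that Ostrovskii's criterion, when negated, is equivalent to $CE$ and not to some strictly stronger or weaker property, and that his "expansion at every scale" formulation yields the uniform-in-$E$ lower bound $\mu(\partial_R E)\geq \epsilon\mu(E)$ for all small-diameter $E\subseteq \supp(\mu)$, rather than just some $E$. Once these identifications are made, no further argument is required, which is why the authors describe the result as an immediate consequence.
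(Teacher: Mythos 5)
Your proposal takes essentially the same approach as the paper: both reduce the statement to a direct invocation of Ostrovskii's Theorem 2 in \cite{Ostrovskiil:2009kl}, with the verification left as a routine translation of his expansion criterion into the $\partial_R$ notation. The paper is marginally more concrete in that it pins down the constants via Ostrovskii's Theorem 1 (taking $R=8D+1$ and $\epsilon=1/4D$) and specifies that $\mu$ is the restriction of his measure $\nu_n$ to the set $F$ for $n$ large enough that $n-R/2>S$, but the substance and level of detail are the same.
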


\begin{proof}
Let $D$ be as in the conclusion of \cite[Theorem 1]{Ostrovskiil:2009kl}.  Let $R=8D+1$ and $\epsilon=1/4D$ (so in Ostrovskii's notation, $\epsilon=\phi(D,R)$).    Let $n$ be so large that $n-R/2>S$, and let $\nu_n$ and $F$ be as given in the conclusion of \cite[Theorem 2]{Ostrovskiil:2009kl}.  We may then take $\mu$ to be the restriction of $\nu_n$ to $F$, and it is easy to check that our conclusion follows from \cite[Theorem 2]{Ostrovskiil:2009kl}.
\end{proof}

Of course, the conclusion of Theorem \ref{osthe} simply \emph{is} the negation of property $ULA_\mu$, so the following corollary is immediate.

\begin{ceb}\label{ceb}
For a space $X$, $ULA_\mu$ implies $CE$. \qed
\end{ceb}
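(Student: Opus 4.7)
My plan is to prove this by contrapositive, showing that the failure of $CE$ forces the failure of $ULA_\mu$. Using the second (set-theoretic) formulation from Lemma \ref{bmulem}, the negation of $ULA_\mu$ asserts the existence of $R,\epsilon>0$ such that for every $S>0$ there is a probability measure $\mu$ on $X$ for which every finite $E\subseteq X$ with $\diam(E)\leq S$ satisfies $\mu(\partial_R E)\geq \epsilon \mu(E)$. Theorem \ref{osthe} provides precisely such $R,\epsilon$ and $\mu$, modulo one minor discrepancy: its conclusion only quantifies over sets $E\subseteq \supp(\mu)$.

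To close this gap, I would argue that any finite $E\subseteq X$ can be replaced by $E':=E\cap \supp(\mu)$ without loss. Clearly $\diam(E')\leq \diam(E)\leq S$ and $\mu(E')=\mu(E)$. For the boundary term, any point $y\in \supp(\mu)\cap \partial_R E'$ lies in $N_R(E')\subseteq N_R(E)$ and is not in $E'$; since $y\in \supp(\mu)$, this forces $y\notin E$, so $y\in \partial_R E$. Therefore $\mu(\partial_R E')\leq \mu(\partial_R E)$, and applying Theorem \ref{osthe} to $E'$ yields $\mu(\partial_R E)\geq \epsilon \mu(E)$, as needed.

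I do not anticipate a substantive obstacle: once Theorem \ref{osthe} and the set-theoretic reformulation of $ULA_\mu$ from Lemma \ref{bmulem} are in hand, the corollary is essentially a contrapositive tautology, and only the trivial support-restriction step above demands any attention.
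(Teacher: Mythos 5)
Your proposal is correct and follows exactly the paper's route: the paper simply observes that the conclusion of Theorem \ref{osthe} is the negation of the set-theoretic formulation of $ULA_\mu$ from Lemma \ref{bmulem}, so the corollary is immediate by contrapositive. Your extra step replacing $E$ by $E\cap\supp(\mu)$ correctly handles the only (trivial) discrepancy, namely that Theorem \ref{osthe} quantifies over subsets of $\supp(\mu)$ rather than all finite subsets of $X$.
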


\subsection*{The metric sparsification property}

We recall the following definition.

\begin{mspdef}\label{mspdef}
Let $X$ be a space.  $X$ is said to have the \emph{metric sparsification property} ($MSP$) if there exists $c>0$ such that for all $R>0$ there exists $S>0$ such that for all probability measures $\mu$ on $X$ there exists a subset $\Omega\subseteq X$ equipped with a decomposition 
$$
\Omega=\sqcup_i\Omega_i
$$
such that
\begin{itemize}
\item $\mu(\Omega)\geq c$;
\item for all $i$, $\diam(\Omega_i)\leq S$;
\item for all $i\neq j$, $d(\Omega_i,\Omega_j)>R$.
\end{itemize}
\end{mspdef}

\begin{msprem}\label{msprem}
It is proved in \cite[Proposition 3.3]{Chen:2008so} that if $X$ has $MSP$ for some constant $c>0$, then it has it for any $c$ with $0<c<1$.
\end{msprem}

\begin{mspb}\label{mspb}
For a space $X$, $MSP$ is equivalent to $ULA_\mu$.
\end{mspb}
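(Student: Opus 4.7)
The plan is to handle the two implications separately: the forward direction $MSP \Rightarrow ULA_\mu$ is a pigeonhole argument, while the reverse direction requires a greedy iterative extraction whose termination is the main obstacle. For $MSP \Rightarrow ULA_\mu$, given $R,\epsilon > 0$, I would use Remark \ref{msprem} to pick $c \in (0,1)$ with $(1-c)/c < \epsilon$ and apply $MSP$ with parameter $2R$ (rather than $R$) at this $c$ to obtain $S$. For a probability measure $\mu$, $MSP$ then produces a decomposition $\Omega = \sqcup_i \Omega_i$ with $\mu(\Omega) \geq c$, $\diam(\Omega_i) \leq S$, and $d(\Omega_i,\Omega_j) > 2R$ for $i \neq j$. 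The $2R$-separation has two consequences: each $\partial_R \Omega_i$ lies in $X \setminus \Omega$ (since $N_R(\Omega_i) \cap \Omega_j = \emptyset$), and the sets $\partial_R \Omega_i$ are pairwise disjoint (if $y \in \partial_R \Omega_i \cap \partial_R \Omega_j$ then $d(\Omega_i,\Omega_j) \leq 2R$). Therefore $\sum_i \mu(\partial_R \Omega_i) = \mu(\bigcup_i \partial_R \Omega_i) \leq 1-c$ while $\sum_i \mu(\Omega_i) \geq c$, and pigeonhole produces an index $i$ with $\mu(\Omega_i) > 0$ and $\mu(\partial_R \Omega_i) < \frac{1-c}{c}\mu(\Omega_i) < \epsilon\mu(\Omega_i)$, so $E := \Omega_i$ works.

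For $ULA_\mu \Rightarrow MSP$, I would fix any $\epsilon > 0$ (say $\epsilon = 1$) and let $S$ be the constant provided by $ULA_\mu$ for $(R,\epsilon)$. Given a probability measure $\mu$, I iteratively extract sets $E_0, E_1, \ldots$ as follows. Set $Y_0 := X$; at stage $n$, if $b_n := \mu(Y_n) > 0$, form $\mu_n := \mu|_{Y_n}/b_n$ and let
\[
s_n := \sup\{\mu_n(E) : E \subseteq Y_n, \ \diam(E) \leq S, \ \mu_n(\partial_R E) < \epsilon \mu_n(E)\},
\]
which is strictly positive by $ULA_\mu$ applied to $\mu_n$. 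Choose $E_n \subseteq Y_n$ satisfying the defining inequality with $\mu_n(E_n) > s_n/2$, and set $Y_{n+1} := Y_n \setminus N_R(E_n)$. By construction the $E_n$ are pairwise at distance greater than $R$, and the elementary estimate $b_n - b_{n+1} = \mu(Y_n \cap N_R(E_n)) < (1+\epsilon)\mu(E_n)$ yields $\sum_n \mu(E_n) \geq (1 - b_\infty)/(1+\epsilon)$, where $b_\infty := \lim_n b_n$.

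The main obstacle is showing $b_\infty = 0$. I would argue by contradiction: if $b_\infty > 0$, then $Y_\infty := \bigcap_n Y_n$ has $\mu(Y_\infty) = b_\infty$, so $\mu_\infty := \mu|_{Y_\infty}/b_\infty$ is a probability measure, and $ULA_\mu$ produces $E_\infty \subseteq Y_\infty$ of diameter $\leq S$ with $\mu_\infty(\partial_R E_\infty) < \epsilon \mu_\infty(E_\infty)$. Since $E_\infty \subseteq Y_n$ for all $n$ and $Y_n \downarrow Y_\infty$, continuity of measure gives
\[
\frac{\mu_n(\partial_R E_\infty)}{\mu_n(E_\infty)} = \frac{\mu(\partial_R E_\infty \cap Y_n)}{\mu(E_\infty)} \xrightarrow[n\to\infty]{} \frac{\mu(\partial_R E_\infty \cap Y_\infty)}{\mu(E_\infty)} < \epsilon,
\]
so $E_\infty$ is eventually a valid candidate in the supremum defining $s_n$, forcing $s_n \geq \mu_n(E_\infty) \to \mu_\infty(E_\infty) > 0$ and hence $\mu(E_n) = b_n \mu_n(E_n) \geq \tfrac{1}{2} b_n s_n$ to be bounded below by a positive constant for all large $n$. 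This contradicts the fact that the disjoint $E_n$ satisfy $\sum_n \mu(E_n) \leq 1$. Therefore $b_\infty = 0$, and the family $\{E_n\}$ exhibits $MSP$ with constant $c = 1/(1+\epsilon)$.
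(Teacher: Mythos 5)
Your proposal is correct, and both directions follow the paper's strategy: the forward implication is the same pigeonhole argument (apply $MSP$ at scale $2R$ with $c>1/(1+\epsilon)$, use disjointness of the $R$-neighbourhoods of the $\Omega_i$, and select a good index), and the reverse implication is the same greedy extraction of $R$-separated sets of diameter at most $S$, yielding the same constant $c=1/(1+\epsilon)$. The one genuine difference is how you guarantee the greedy process exhausts the mass: the paper disposes of this by an approximation argument reducing to measures of finite support, so that the iteration literally terminates after finitely many steps, whereas you run the iteration for arbitrary $\mu$, take a near-supremal Følner set at each stage, and rule out $b_\infty>0$ by applying $ULA_\mu$ to the limiting measure on $\bigcap_n Y_n$ and using continuity of measure. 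Your version is longer but self-contained and works directly for arbitrary probability measures (the separation of the $E_n$ and the estimate $b_n-b_{n+1}<(1+\epsilon)\mu(E_n)$ are both verified correctly, and the definition of $MSP$ permits the resulting countably infinite decomposition); the paper's version is shorter but leaves the approximation step to the reader. One cosmetic point: in the forward direction the pigeonhole only yields some $i$ with $\mu(\Omega_i)>0$ and $\mu(\partial_R\Omega_i)\leq\frac{1-c}{c}\mu(\Omega_i)$ (non-strict), but the strict inequality $<\epsilon\mu(\Omega_i)$ still follows from the choice of $c$, so nothing is lost.
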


\begin{proof}
Assume that $X$ has $MSP$.  Let $R,\epsilon>0$, and let $\mu$ be a probability measure on $X$.  Let $c>1/(1+\epsilon)$; by Remark \ref{msprem}, we may assume that this is the `$c$' in the definition of $MSP$.  Let $S>0$ and $\Omega=\sqcup_i\Omega_i$ be a decomposition as in the definition of $MSP$ with respect to the parameter $2R$ and the probability measure $\mu$.  As the collection $\{N_R(\Omega_i)\}_i$ is disjoint, and as $\mu(\Omega)\geq c$, we have
$$
\sum_i \mu(N_R(\Omega_i))\leq \mu(X)=1\leq \frac{1}{c}\mu(\Omega)=\frac{1}{c}\sum_i\mu(\Omega_i).
$$
It follows that there exists $i$ such that if $E:=\Omega_i$ then
$$
\mu(N_R(E))\leq \frac{1}{c}\mu(E);
$$
as the left-hand-side is simply $\mu(E)+\mu(\partial_RE)$, however, this rearranges to
$$
\mu(\partial_RE)\leq \Big(\frac{1}{c}-1\Big)\mu(E),
$$
and by choice of $c$, this is the desired conclusion.\\

For the converse, assume $X$ has $ULA_\mu$.  Let $R>0$ and a probability measure $\mu_1:=\mu$ be given; by an approximation argument, we may assume that $F_1:=\supp(\mu_1)$ is a finite set.  Fix any $\epsilon>0$, and let $S>0$ be as given by $ULA_\mu$.  Let now $E_1\subseteq F_1$ have diameter at most $S$, and be such that 
$$
\mu(\partial_R E_1)<\epsilon \mu(E_1).
$$ 
Set $F_2:=F_1\backslash (N_R(E_1))$ and define $\mu_2$ to be the restriction of $\mu$ to $F_2$, renormalised so as to be a probability measure; by the definition of $ULA_\mu$ there exists $E_2\subseteq F_2$ such that
$$
\mu_2(\partial_RE_2)<\epsilon \mu_2(E_2).
$$
Note, however, that restricted to $F_2$, $\mu_2$ is just a rescaling of $\mu$, whence we also have
$$
\mu(\partial_R E_2\cap F_2)<\epsilon \mu(E_2).
$$

Similarly, we may now set $F_3=F_1\backslash (N_R(E_1)\cup N_R(E_2))$, and continue the process.  It must eventually terminate (as $F_1$ is finite) giving us sequences $F_1\supseteq F_2\supseteq\cdots \supseteq F_n$ and $E_1,...,E_n$ such that $E_i\subseteq F_i$ for all $n$ and so that:
\begin{itemize}
\item for all $i$, $\diam(E_i)\leq S$;
\item for $i\neq j$, $d(E_i,E_j)>R$;
\item for all $i$, $\mu(\partial_R E_i\cap F_i)<\epsilon \mu(E_i)$.
\end{itemize}
Set $\Omega_i:=E_i$ and $\Omega:=\sqcup \Omega_i$.  We have finally that
$$
1=\mu(F_1)=\sum_{i=1}^n \mu(E_i)+\mu(\partial_R E_i\cap F_i)<\sum_{i=1}^n(1+\epsilon)\mu(E_i)=(1+\epsilon)\mu(\Omega).
$$
Hence $\mu(\Omega)\geq 1/(1+\epsilon)$, so we may take $c=1/(1+\epsilon)$ and are done.
\end{proof}

\subsection*{The operator norm localisation property}

We give the following definition of the operator norm localisation property ($ONL$): it is easily seen to be equivalent to the original definition (\cite[Definition 2.2]{Chen:2008so}).

\begin{onldef}\label{onldef}
Let $X$ be a space and $\mu$ a positive measure on $X$.  Let $\mathcal{H}$ be a separable Hilbert space, and consider the space of functions $\phi:X\to\mathcal{H}$ such that the norm
$$
\|\phi\|^2:=\sum_{x\in X}\mu(x)\|\phi(x)\|^2_{\mathcal{H}}
$$
is finite.  Taking the quotient by the subspace of functions of norm zero\footnote{This does form a subspace by a version of Cauchy-Schwarz applied to the obvious inner product associated to $\|\cdot\|$.} gives a Hilbert space, which we denote $l^2(X,\mu,\mathcal{H})$.  Recall that any (bounded) operator $T$ on this Hilbert space can be considered as a matrix $T=(T_{x,y})_{x,y\in X}$, where each $T_{x,y}$ is a bounded operator on $\mathcal{H}$.  

For each $R>0$, define $\C_R[X;\mu,\mathcal{H}]$ to be the collection of bounded operators $(T_{x,y})$ on $l^2(X,\mu,\mathcal{H})$ such that $T_{x,y}=0$ whenever $d(x,y)>R$.

We say that $X$ has the \emph{operator norm localisation property} ($ONL$) if there exists $c>0$ such that for any $R>0$ there exists $S>0$ such that for any probability measure $\mu$ and separable $\mathcal{H}$, and any $T\in \C_R[X;\mu,\mathcal{H}]$ there exists a unit vector $\phi\in l^2(X,\mathcal{H})$ such that $\diam(\supp(\phi))\leq S$ and such that
$$
\|T\phi\|_{l^2(X,\mu,\mathcal{H})}\geq c\|T\|_{\mathcal{B}(l^2(X,\mu,\mathcal{H}))}.
$$
\end{onldef}

\begin{onlrem}\label{onlrem}
It is proved in \cite[Proposition 2.4]{Chen:2008so} that if $X$ has $ONL$ for some constant $c>0$, then it has it for any $c$ with $0<c<1$.
\end{onlrem}

We will need the following technical Lemma.
 
\begin{salem}\label{salem}
Let $X$ be a space with $ONL$.  Then there exists $c>0$ such that for all $R,M>0$ there exists $S>0$ such that for all positive measures $\mu$ and separable Hilbert spaces $\mathcal{H}$, and any positive operator $A\in \C_R[X;\mu,\mathcal{H}]$ with $\|A\|\leq M$ there exists norm one $\psi\in l^2(X,\mu,\mathcal{H})$ such that $\diam(\supp(\psi))\leq S$ and 
$$
\langle \psi,A\psi\rangle\geq c\|A\|.
$$
Moreover, one can take any $c$ with $0<c<1$.
\end{salem}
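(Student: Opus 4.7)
The approach is to apply the definition of $ONL$ to the positive operator $A$ itself and then exploit positivity through the operator inequality $A^2 \leq \|A\|A$ to upgrade the norm bound $\|A\phi\| \geq c_0 \|A\|$ produced by $ONL$ into the desired quadratic-form bound $\langle \phi, A\phi\rangle \geq c_0^2 \|A\|$.

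First, I would reduce to the case of probability measures. Given a small $\epsilon > 0$, choose a unit vector $v \in l^2(X,\mu,\mathcal{H})$ with $\langle v, Av\rangle \geq (1-\epsilon)\|A\|$; by truncating $v$ in $l^2$-norm and invoking bounded geometry (so that $R$-neighbourhoods of finite sets remain finite), I may assume $v$ is supported on a finite set $F \subseteq X$. Set $Y := N_R(F)$, let $Q$ be the orthogonal projection onto $l^2(Y,\mu,\mathcal{H})$, and put $\tilde A := QAQ$. Then $\tilde A$ is positive, has propagation at most $R$, and satisfies $(1-\epsilon)\|A\| \leq \|\tilde A\| \leq \|A\|$; the lower bound comes from $\langle v, \tilde A v\rangle = \langle v, Av\rangle$, using $Qv = v$. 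Because $\mu|_Y$ is finite, rescaling yields a probability measure $\mu'$ supported on $Y$, and operator norms of finite-propagation operators on $l^2(X,\mu,\mathcal{H})$ and $l^2(X,\mu',\mathcal{H})$ agree because the two inner products differ by a global positive scalar.

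Next, I would apply $ONL$ to $\tilde A$ as an operator on $l^2(X,\mu',\mathcal{H})$. By Remark \ref{onlrem}, for any $c_0 \in (0,1)$ there is $S_0 = S_0(R,c_0)$ and a unit vector $\phi$ with $\diam(\supp(\phi)) \leq S_0$ and $\|\tilde A \phi\| \geq c_0 \|\tilde A\|$. Replacing $\phi$ by its renormalised restriction to $Y$ preserves the diameter bound and only increases the ratio $\|\tilde A \phi\|/\|\phi\|$ (since $\tilde A = QAQ$ already kills everything outside $Y$), so one may further assume $\supp(\phi) \subseteq Y$. Positivity of $\tilde A$ via functional calculus then yields $\tilde A^2 \leq \|\tilde A\|\tilde A$, hence
$$
c_0^2 \|\tilde A\|^2 \leq \|\tilde A \phi\|^2 = \langle \phi, \tilde A^2 \phi\rangle \leq \|\tilde A\| \langle \phi, \tilde A \phi\rangle,
$$
which rearranges to $\langle \phi, A\phi\rangle = \langle \phi, \tilde A \phi\rangle \geq c_0^2 \|\tilde A\| \geq c_0^2 (1-\epsilon)\|A\|$. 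Converting back to $\mu$ is a global rescaling that leaves the ratio $\langle \phi, A\phi\rangle/\|A\|$ invariant, producing the required unit vector $\psi \in l^2(X,\mu,\mathcal{H})$. Given a prescribed $c \in (0,1)$, choose $c_0$ close to $1$ and $\epsilon$ small enough that $c_0^2(1-\epsilon) \geq c$, and take $S := S_0$.

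The technically fiddly step is the reduction from an arbitrary positive measure to a finite (probability) one, but this is routine truncation-plus-bounded-geometry bookkeeping. The conceptual heart of the argument is the inequality $A^2 \leq \|A\|A$ for positive $A$, which converts the norm bound supplied by $ONL$ into the desired bilinear bound at the cost of squaring the constant; since the $ONL$ constant $c_0$ may be taken arbitrarily close to $1$, so may $c$.
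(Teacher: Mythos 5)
Your proof is correct, but the central step is genuinely different from the paper's. The paper approximates the square root function uniformly on $[0,M]$ by polynomials $p_n$, applies $ONL$ to $p_n(A)$ (which has propagation $R'$ depending on the degree of $p_n$, hence on $M$ and the approximation accuracy), and then uses $\|p_n(A)\psi\|\approx\|\sqrt{A}\,\psi\|=\sqrt{\langle\psi,A\psi\rangle}$ to convert the norm bound into the quadratic-form bound. You instead apply $ONL$ directly to $A$ at propagation $R$ and use the elementary operator inequality $A^2\leq\|A\|A$ to pass from $\|A\phi\|\geq c_0\|A\|$ to $\langle\phi,A\phi\rangle\geq c_0^2\|A\|$; both routes square the constant and then recover any $c<1$ from Remark \ref{onlrem}. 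Your version buys two things: it avoids the functional calculus and the propagation blow-up entirely, so the resulting $S$ depends only on $R$ and the target constant and not on $M$ (the hypothesis $\|A\|\leq M$ becomes superfluous); and your explicit truncation-and-compression step $\tilde A=QAQ$ honestly addresses the passage from the positive measures in the statement to the probability measures in Definition \ref{onldef}, a point the paper's proof passes over in silence. The one thing to say carefully is the claim that restricting $\phi$ to $Y$ and renormalising does not hurt: this is because $\tilde A\phi=\tilde AQ\phi$ and $\|Q\phi\|\leq 1$, so the ratio $\|\tilde A\phi\|/\|\phi\|$ can only increase -- you assert this correctly, and with that the argument is complete.
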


\begin{proof}
Let $R,M$ and $A$ be as in the statement.  
Let $q$ be the (positive) square root function on $[0,M]$, and let $(p_n)$ be a sequence of polynomials converging uniformly to $q$ on $[0,M]$.  Let $\epsilon>0$ and $p_n$ be such that $\|q-p_n\|<\epsilon$, and note that for any $R>0$ there exists $R'$ such that if $T\in \C_R[X;\mu,\mathcal{H}]$, then $p_n(T)\in \C_{R'}[X;\mu,\mathcal{H}]$.

Let now $c$ be as given in the definition of $ONL$ for $X$, and let $S$ be as in the definition of $ONL$ with respect to the parameter $R'$. Let $\psi\in l^2(X,\mu,\mathcal{H})$ be of norm one, with diameter of support at most $S$, and such that $\|p_n(A)\psi\|\geq c\|p_n(A)\|$.  Then 
\begin{align*}
\sqrt{\langle \psi,A\psi\rangle} & =\|q(A)\psi\|\geq \|p_n(A)\psi\|-\|q_n(A)-p_n(A)\| \\
& \geq c\|p_n(A)\|-\epsilon\geq c(\|q(A)\|-\epsilon)-\epsilon \\ &=c\sqrt{\|A\|}-\epsilon(1+c);
\end{align*}
as $\epsilon$ was arbitrary (of course, $S$ implicitly depends on $\epsilon$, but all that matters is that for each $\epsilon$, some $S$ exists), this completes the proof (one also alters $c$ slightly to get $c$ as in the statement).  

To see that one can get any $c$ with $0<c<1$, it suffices to use Remark \ref{onlrem}, and then again use that $\epsilon$ was arbitrary. 
\end{proof}

\begin{onlb}\label{onlb}
If a space $X$ has $ONL$, then it has $ULA$.
\end{onlb}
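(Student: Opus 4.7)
The plan is to apply Lemma \ref{salem} to a symmetric random walk associated to each finite $F$. The resulting localized approximate eigenvector (for eigenvalue $1$) will, via a Dirichlet-form identity, have almost-constant values in a quantitative sense; squaring it produces the $l^1$-type test function required by the formulation of $ULA$ in Lemma \ref{blem}.

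Given $R,\epsilon>0$, I would first fix $c\in(0,1)$ with $1-c < \epsilon^2/(16N_R^2)$; this $c$ depends only on $R$ and $\epsilon$. Let $S$ be the output of Lemma \ref{salem} applied with propagation parameter $R$, norm bound $M=1$, and constant $c$; this $S$ will serve as the $ULA$-parameter. Given a finite $F\subseteq X$, I would define the positive measure $\mu(x):=|B(x,R)\cap F|$ for $x\in F$ (and $\mu(x):=0$ off $F$), and the operator $T\in\C_R[X;\mu,\C]$ by $T(x,y):=1/|B(x,R)\cap F|$ when $y\in B(x,R)\cap F$ and $0$ otherwise. A detailed-balance check gives $T=T^*$ on $l^2(X,\mu,\C)$; the row-sums of $T$ equal $1$, so $T\mathbf{1}_F=\mathbf{1}_F$ and $\|T\|=1$. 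One then feeds the positive operator $A:=(I+T)/2$, which has propagation $\leq R$ and norm exactly $1$, into Lemma \ref{salem}.

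This yields a unit $\psi\in l^2(X,\mu,\C)$ with $\diam\supp\psi\leq S$ and $\langle\psi,A\psi\rangle\geq c$, whence $\langle\psi,T\psi\rangle\geq 2c-1$. Since $T$ has non-negative entries, passing from $\psi$ to $|\psi|$ preserves all these conditions, so one may assume $\psi\geq 0$. The Dirichlet-form identity
$$
\|\psi\|_\mu^{\,2}-\langle\psi,T\psi\rangle_\mu \;=\; \tfrac12\sum_{x\in F}\sum_{\substack{y\in F\\ d(x,y)\leq R}}(\psi(x)-\psi(y))^2
$$
then converts this spectral information into $\sum(\psi(x)-\psi(y))^2\leq 4(1-c)$.

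Finally, set $\phi(x):=\psi(x)^2$ for $x\in F$ and zero off $F$; then $\diam\supp\phi\leq S$ and $\sum_F\phi=\|\psi\|_{l^2(F)}^2\geq\|\psi\|_\mu^2/N_R=1/N_R$. The factorization $|\phi(x)-\phi(y)|=|\psi(x)-\psi(y)|\cdot(\psi(x)+\psi(y))$, Cauchy--Schwarz, and the routine bound $\sum(\psi(x)+\psi(y))^2\leq 4\|\psi\|_\mu^2=4$ give
$$
\sum_{x\in F}\sum_{\substack{y\in F\\ d(x,y)\leq R}}|\phi(x)-\phi(y)|\;\leq\;4\sqrt{1-c}\;<\;\epsilon/N_R\;\leq\;\epsilon\sum_{x\in F}\phi(x),
$$
which is precisely the $l^1$-formulation of $ULA$ from Lemma \ref{blem}. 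The essential technical choice, and the main obstacle, is the operator $A$: one needs a positive finite-propagation operator whose norm is attained by a ``maximally spread'' vector (here $\mathbf{1}_F$), so that any localized approximate norm-attainer $\psi$ is forced to be an approximate eigenvector of $T$; the Dirichlet form then converts this approximate-eigenvector property into an honest Følner-type estimate after the passage $\psi\mapsto\psi^2$.
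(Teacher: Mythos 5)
Your proof is correct and follows essentially the same route as the paper's: apply Lemma \ref{salem} to a positive finite-propagation operator of the form ``constant minus Laplacian'' built from the $R$-neighbourhood relation on $F$, use the Dirichlet-form identity to get a localized vector $\psi$ with small $\ell^2$-gradient, and then pass to $\phi=\psi^2$ via Cauchy--Schwarz to obtain the $\ell^1$ variational inequality of Lemma \ref{blem}. The only difference is cosmetic: the paper uses the unnormalized Laplacian $\Delta_R$ on $l^2(F)$ with counting measure and $A_R=\|\Delta_R\|-\Delta_R$, while you use the normalized Markov operator on the degree-weighted $l^2(X,\mu)$ and $A=(I+T)/2$, which has the mild advantage that $\|A\|=1$ is known exactly.
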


\begin{proof}
Let $X$ be a space with $ONL$.  Let $R,\epsilon>0$. Let $F$ be a finite subset of $X$, and take $\mu$ to be the measure given by the characteristic function of $F$.  Take $\mathcal{H}=\C$, so that $l^2(X,\mu,\mathcal{H})$ identifies naturally with $l^2(F)$; we make this identification without further comment, and denote by $\{\delta_x\}_{x\in F}$ the canonical basis of this Hilbert space.  

Now, let $\Delta_R$ be the operator on $l^2(F)$ defined by
$$
\Delta_R:\delta_x\mapsto \sum_{y\in F,~d(x,y)\leq R}\delta_x-\delta_y;
$$
note that if $N_R$ is an absolute bound for the cardinality of a ball of radius $R$ in $X$, then $\|\Delta_R\|\leq 2N_R$.  Let $A_R=\|\Delta_R\|-\Delta_R$, which is an operator in $\C_R[X;\mu,\mathcal{H}]$.  An explicit computation (cf. line \eqref{lap} below) shows that $\Delta_R$ is a positive operator, whence $A_R$ is too and $\|A_R\|=\|\Delta_R\|$.

Choose $c$ with
$$
1>c>1-\Big(\frac{\epsilon}{2\sqrt{N_R}}\Big)^2.
$$
Lemma \ref{salem} implies that there exists $S$ (depending on $R$ and $\epsilon$, but not on $F$) and a norm one function $\psi\in l^2(F)$ with $\diam(\supp(\psi))\leq S$ and such that 
$$
\langle \psi,A_R\psi\rangle\geq c\|A_R\|.
$$
Expanding and rearranging this gives that $\langle \psi,\Delta_R\psi\rangle \leq 1-c$, and further expanding the left-hand-side and rearranging gives
\begin{equation}\label{lap}
\langle \psi,\Delta_R\psi\rangle=\frac{1}{2}\sum_{x\in F}\sum_{\substack{y\in F \\ d(x,y)\leq R}}|\psi(x)-\psi(y)|^2\leq 1-c.
\end{equation}
On the other hand, Cauchy-Schwarz implies that if $\phi(x)=|\psi(x)|^2$ then
\begin{align*}
\sum_{x\in F}& \sum_{\substack{y\in F \\ d(x,y)\leq R}}|\phi(x)-\phi(y)| \\ & \leq \Big(\sum_{x\in F}\sum_{\substack{y\in F \\ d(x,y)\leq R}}|\psi(x)-\psi(y)|^2\Big)^\frac{1}{2}\Big(\sum_{x\in F}\sum_{\substack{y\in F \\ d(x,y)\leq R}}|\psi(x)+\psi(y)|^2\Big)^\frac{1}{2}.
\end{align*}
Moreover, if $N_R$ is the maximal number of points in a ball of radius $R$ in $X$, this and the fact that $\|\psi\|=1$ in turn imply that
$$
\sum_{x\in F}\sum_{\substack{y\in F\\ d(x,y)\leq R}}|\phi(x)-\phi(y)|\leq 2\sqrt{N_R}\Big(\sum_{x\in F}\sum_{\substack{y\in F\\ d(x,y)\leq R}}|\psi(x)-\psi(y)|^2\Big)^\frac{1}{2}.
$$
Finally, combining this with line \eqref{lap} above gives that
$$
\sum_{x\in F}\sum_{\substack{y\in F\\ d(x,y)\leq R}}|\phi(x)-\phi(y)|\leq 2\sqrt{N_R}\sqrt{1-c}=2\sqrt{N_R}\sqrt{1-c}\sum_{x\in F}|\phi(x)|,
$$
where the equality uses again that $\|\psi\|=1$ and the definition of $\phi$; the choice of $c$ completes the argument.
\end{proof}

\section{Examples: spaces of graphs}\label{countersec}

In this section, we give two examples of spaces without $ULA$: expanding graphs, and sequences of graphs with large girth.  The first of these generalises and reproves \cite[Theorem 6.5]{Chen:2008so} and the main theorem of \cite{Khukhro:2011tw}, while the second generalises and reproves the main result from \cite{Willett:2010kx}.  We also prove that for a box space of a discrete group, all the properties $ULA$, $ULA_\mu$, $A$, $MSP$ and $ONL$ are equivalent, and equivalent to amenability of the original group.  Finally, we discuss how the non-bounded geometry examples of Nowak with $CE$ but not $A$ \cite{Nowak:2007vn} fit into our framework.

We start by recalling the definitions.

\begin{sogdef}\label{sogdef}
A space $X$ is called a \emph{space of graphs} if there exists a sequence $(X_n)$ of finite connected graphs such that $X=\sqcup X_n$ (as a set), and if the metric on $X$ restricts to the edge metric on each $X_n$, and is such that $d(X_n,X_n^c)\to\infty$ as $n\to\infty$.   Note that as we assume $X$ has bounded geometry, all the vertex degrees of all the $X_n$ must be uniformly bounded.

A space of graphs $X$ is called an \emph{expander} if $|X_n|\to\infty$ as $n\to\infty$ and there exists $\epsilon>0$ such that whenever $A\subseteq X_n$ is such that $|A|\leq |X_n|/2$ we have that $|\partial_1A|\geq \epsilon|A|$.

A space of graphs $X$ is said to have \emph{large girth} if $\text{girth}(X_n)\to\infty$ as $n\to\infty$ (recall that the \emph{girth} of a finite graph is the length of its shortest non-trivial cycle). 
\end{sogdef}

\begin{expthe}\label{expthe}
Say $X$ is a space of graphs.  If $X$ is an expander, or if $X$ has large girth and all vertices have degree at least three, then $X$ does not have $ULA$.
\end{expthe}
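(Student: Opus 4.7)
The plan is to negate $ULA$ directly. I fix $R=1$ and a constant $\epsilon>0$ (depending on the case), and for every $S>0$ exhibit a finite $F\subseteq X$ such that every $E\subseteq X$ with $\diam(E)\leq S$ satisfies $|\partial_1 E\cap F|\geq \epsilon|E\cap F|$. The natural choice is $F=X_n$ for large $n$. Because $d(X_n,X_n^c)\to\infty$, once $n$ is large enough that $d(X_n,X_n^c)>S+1$, any $E$ of diameter $\leq S$ meeting $X_n$ is fully contained in $X_n$, and so is its $1$-neighbourhood; thus $|\partial_1 E\cap F|$ and $|E\cap F|$ may be computed entirely inside the graph $X_n$.

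For the expander case, take $\epsilon$ to be the expansion constant $\epsilon_0$. In addition to $d(X_n,X_n^c)>S+1$, require $|X_n|\geq 2N_S$, where $N_S$ is the bounded geometry bound on the cardinality of a ball of radius $S$; both hold for all large $n$. For $E$ of diameter $\leq S$ meeting $F=X_n$, the set $A:=E\cap X_n$ satisfies $|A|\leq N_S\leq |X_n|/2$, so the expander hypothesis applied in $X_n$ gives $|\partial_1^{X_n}A|\geq \epsilon_0|A|$. A quick check shows $\partial_1^{X_n}A\subseteq \partial_1 E\cap F$ (any $y$ in the former lies in $X_n\setminus E$ and is adjacent to $E$), which gives the desired inequality.

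For the large girth case, let $D$ be the uniform degree bound from bounded geometry and take $\epsilon=1/D$. Choose $n$ so that $\text{girth}(X_n)>2S+3$ and $d(X_n,X_n^c)>S+1$; a standard BFS argument, using that the girth forbids both intra-level edges and multiple parents up to level $S+1$, shows the ball $B(x;S+1)$ induces a tree $T$ in $X_n$ for every $x\in X_n$. Given $E$ of diameter $\leq S$ meeting $F=X_n$, pick $x\in E\cap X_n$; then $E\subseteq B(x;S)$ and $\partial_1 E\subseteq T$, and every vertex of $E$ retains its full $X_n$-degree (hence $\geq 3$) inside $T$. Summing degrees,
$$3|E|\;\leq\;\sum_{v\in E}\deg_T(v)\;=\;2\cdot(\text{edges of }T\text{ within }E)+(\text{edges of }T\text{ from }E\text{ to }T\setminus E).$$
Since $T$ is a tree the first term is at most $2(|E|-1)$; every edge in the second term lands in $\partial_1 E$, and each $y\in\partial_1 E$ receives at most $D$ of them. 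Hence $D|\partial_1 E|\geq |E|+2$, giving $|\partial_1 E\cap F|\geq \epsilon|E\cap F|$.

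The main substantive ingredient is the tree isoperimetric estimate in the large girth case; everything else is bookkeeping driven by the spacing $d(X_n,X_n^c)\to\infty$. The key geometric fact is that a finite subset of a tree of minimum degree $\geq 3$ has boundary comparable to its interior, which falls out of the degree-count above once one knows $B(x;S+1)$ is genuinely a tree. The resulting constant $1/D$ is almost certainly far from optimal, but any uniform positive constant suffices to falsify $ULA$.
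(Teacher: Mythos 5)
Your proof is correct and follows essentially the same route as the paper: take $F=X_n$ for $n$ large enough that the spacing and (in the expander case) the cardinality bound $|X_n|\geq 2N_S$ force any candidate F\o lner set of diameter $\leq S$ to live inside $X_n$, then invoke the expander constant, respectively a tree isoperimetric estimate, to contradict $ULA$. The only difference is that where the paper cites Lemma 3.3 of Willett's girth paper for the bound on subsets of trees of minimum degree three, you prove it directly by the degree-count $3|E|\leq 2(|E|-1)+D|\partial_1E|$, which is a correct, self-contained substitute (with constant $1/D$ in place of the cited $1/(D-1)$, which is immaterial here).
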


\begin{proof}
Assume first that $X$ is an expander, let $S>0$, and let $N_S$ be the maximum number of vertices in a ball of radius $S$.  Let $n$ be so large that $|X_n|\geq 2N_S$, and set $F=X_n$.  Then if $E\subseteq X_n$ is such that $\diam(E)\leq S$ we have $|\partial_1E|\geq \epsilon |E|$ by the expander assumption.  This contradicts $ULA$.\\

Say now that $X$ has large girth, and all vertices in $X$ have degree at least three.  Let $D$ be an upper bound on the degrees of all vertices in $X$.  Let $S>0$, and let $n$ be so large that any subset of diameter $S+1$ of $X_n$ is isometric to a subset of a tree (with all vertices of degrees at least three).  Then \cite[Lemma 3.3]{Willett:2010kx} implies that for any subset $E$ of $X_n$ such that $\diam(E)\leq S$ we have $|\partial_1E|\geq \frac{1}{D-1}|E|$ (the definition of `$\partial_1$' used in \cite{Willett:2010kx} is different to that used here, but this does not matter); again, this contradicts $ULA$.
\end{proof}

\begin{cocom}\label{cocom}
None of the properties $A$, $ULA_\mu$, $ULA$, $MSP$ or $ONL$ are implied by $CE$.  In particular, $ULA_\mu$, $A$ and $MSP$ are all strictly stronger than $CE$.
\end{cocom}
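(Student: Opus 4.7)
The plan is to exhibit a single space $X$ that satisfies $CE$ but fails $ULA$; such an $X$ at once refutes the converse of every one of the implications $A\Rightarrow CE$, $ULA_\mu\Rightarrow CE$ and $MSP\Rightarrow CE$ recorded in the diagram, because Proposition~\ref{ab}, Lemma~\ref{bbmulem}, Theorem~\ref{mspb}, \cite[Section~4]{Chen:2008so} and Theorem~\ref{onlb} together yield $A\Rightarrow ULA_\mu\Leftrightarrow MSP\Rightarrow ONL\Rightarrow ULA$. So one counterexample kills all five non-implications simultaneously.

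For $X$ I would take a space of graphs in the sense of Definition~\ref{sogdef} built from finite quotients of a non-abelian free group $F=\langle a,b\rangle$. Concretely, pick a nested sequence of finite-index normal subgroups $N_1\supseteq N_2\supseteq\cdots$ of $F$ with $\bigcap_n N_n=\{e\}$ and $N_n\cap B(e,n)=\{e\}$ (such a sequence exists because $F$ is residually finite), set $X_n:=F/N_n$ with the metric inherited from the word length on $F$, and let $X$ be their disjoint union equipped with any metric making $d(X_n,X_n^c)\to\infty$. Each $X_n$ is a connected $4$-regular graph of girth at least $n$, so $X$ has large girth and every vertex has degree at least three. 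Theorem~\ref{expthe} then immediately gives $X\notin ULA$.

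To see that $X\in CE$, I would use that $F$ is a-T-menable: let $\psi:F\to[0,\infty)$ be a proper conditionally negative definite length function, and let $\pi:F\to\mathrm{Isom}(\h_0)$ be an associated affine isometric action with linear part and $1$-cocycle $b$ satisfying $\|b(g)\|^2=\psi(g)$. The injectivity radius condition $N_n\cap B(e,n)=\{e\}$ lets one push $b$ down to a well-defined map $f_n:X_n\to\h_0$ on pairs of distance at most $n/2$ whose distortion is governed by $\psi$ alone. Placing the images in orthogonal summands of $\bigoplus_n\h_0$ and translating them far apart to reflect the separation between boxes, one assembles a map $f:X\to\bigoplus_n\h_0$ whose lower distortion $\rho_-$ is uniform in $n$ thanks to properness of $\psi$ together with the growing injectivity radii. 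Combined with the previous paragraph this yields $X\in CE\setminus ULA$, from which the corollary follows.

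The main obstacle is this last step: checking that the per-box embeddings are glued with uniform distortion control. This is essentially the standard observation that a box space of a residually finite a-T-menable group coarsely embeds into Hilbert space, so the genuinely new content of the corollary lies in combining it with the large-girth obstruction provided by Theorem~\ref{expthe}.
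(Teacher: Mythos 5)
Your overall strategy is exactly the paper's: produce one large-girth space of graphs that has $CE$, apply Theorem~\ref{expthe} to rule out $ULA$, and then use the implications of Section~\ref{mspsec} to rule out all five properties at once. The $ULA$ half and the logical reduction are fine. The gap is in your verification of $CE$, which is precisely the hard part and the reason the paper cites Arzhantseva--Guentner--\v{S}pakula \cite{Arzhantseva:2011vn} as a black box rather than giving a construction.

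Concretely, the claim that a box space of the free group built from \emph{any} nested sequence of finite-index normal subgroups with growing injectivity radius coarsely embeds into Hilbert space is false. If you take $F_2\leq SL_2(\Z)$ and use congruence subgroups, the resulting box space has large girth and growing injectivity radius but is an expander (property $(\tau)$, by Selberg's theorem), hence fails $CE$; so a-T-menability of $F$ plus residual finiteness cannot suffice. The place your argument breaks is the lower distortion bound: pushing the cocycle $b$ down to $X_n$ via the injectivity radius only controls $\|f_n(x)-f_n(y)\|$ for pairs at distance at most (roughly) $n/2$, whereas a coarse embedding needs $\rho_-(d(x,y))\leq\|f(x)-f(y)\|$ with $\rho_-(t)\to\infty$ for \emph{all} pairs, including those at distance comparable to $\diam(X_n)\gg n$. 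No gluing of locally defined maps gives this for free --- expanders are exactly spaces that embed locally but not globally. (There is also a smaller issue even locally: $b$ does not descend to $F/N_n$, and taking minimal coset representatives $g,h$ does not guarantee that $h^{-1}g$ realises the quotient distance.) The Arzhantseva--Guentner--\v{S}pakula example uses a very particular tower (iterated mod-$2$ homology covers) and a genuinely nontrivial wall-space argument to get the global lower bound; to make your proof correct you should either reproduce that argument or, as the paper does, cite it.
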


\begin{proof}
Arzhantseva--Guentner--\v{S}pakula \cite{Arzhantseva:2011vn} have given an example of a space with $CE$, which is actually a space of graphs with large girth (see also Ostrovskii \cite{Ostrovskii:2011oq}).  The corollary is immediate from this, Theorem \ref{expthe}, and the results of section \ref{mspsec}.
\end{proof}

Combining our results with Sako's, it follows that in fact $CE$ is strictly stronger than all of the properties $ULA_\mu$, $MSP$, $A$, $ONL$.  

We now look at box spaces.

\begin{boxdef}\label{boxdef}
Let $\Gamma$ be an infinite finitely generated discrete group, and let $\Gamma_1\geq \Gamma_2\geq \Gamma_3\geq\cdots$ be a decreasing sequence of finite index normal subgroups of $\Gamma$ such that $\cap_n\Gamma_n=\{e\}$.  Fix a generating set of $\Gamma$, and for each $n$ let $X_n=\Gamma/\Gamma_n$, equipped with the (Cayley) graph structure coming from the fixed finite generating set of $\Gamma$.

The \emph{box space} associated to this data is the\footnote{It is unique up to coarse equivalence.} space of graphs associated to the sequence of graphs $(X_n)$. 
\end{boxdef}

\begin{boxthe}\label{boxthe}
Let $\Gamma$ be a finitely generated discrete group, and $X=\sqcup X_n$ an associated box space.  Then the following are equivalent:
\begin{enumerate}
\item $\Gamma$ is amenable;
\item $X$ has all of the properties $A$, $ULA$, $ULA_\mu$, $MSP$, $ONL$.
\item $X$ has one of the properties $A$, $ULA$, $ULA_\mu$, $MSP$, $ONL$.
\end{enumerate}
\end{boxthe}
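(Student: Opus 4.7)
The plan is to split the equivalence into three pieces. The implication $(2)\Rightarrow(3)$ is trivial. For $(1)\Rightarrow(2)$, using the chain $A \Rightarrow ULA_\mu \Leftrightarrow MSP \Rightarrow ONL \Rightarrow ULA$ supplied by Proposition \ref{ab}, Theorem \ref{mspb}, step 5 of diagram \eqref{diag}, and Theorem \ref{onlb}, it is enough to prove that the box space of an amenable group has property $A$. For $(3)\Rightarrow (1)$, by the same chain of implications each of the five listed properties entails $ULA$, so it suffices to establish: if the box space $X$ has $ULA$, then $\Gamma$ is amenable. The key geometric fact used in both halves is that, since $\bigcap_n\Gamma_n=\{e\}$, the injectivity radius of the covering $\Gamma\to X_n$ tends to infinity with $n$, and simultaneously $d(X_n,X_n^c)\to\infty$ by the definition of a space of graphs.

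For $(1)\Rightarrow$ property $A$ of $X$, fix $R,\epsilon>0$ and pick a F\o lner set $F\subseteq\Gamma$ with $|gF\triangle F|<(\epsilon/2)|F|$ for every $g\in B(e;R)$; set $S=\diam(F)+R$. For all $n$ large enough that the injectivity radius of $\Gamma\to X_n$ exceeds $S$, the set $\tilde xF\subseteq\Gamma$ projects bijectively to $X_n$ for any lift $\tilde x\in\Gamma$ of any $x\in X_n$; normality of $\Gamma_n$ moreover ensures that the pushforward $\xi_x:=|F|^{-1}\pi_n(\chi_{\tilde xF})\in\mathrm{Prob}(X_n)\subseteq\mathrm{Prob}(X)$ is independent of the lift. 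For $x,y\in X_n$ with $d(x,y)\leq R$, lift $x$ to $\tilde x$ and $y$ to $\tilde x g$ with $g\in B(e;R)$ (possible by the injectivity radius bound); left-invariance of $\pi_n$ then gives $\|\xi_x-\xi_y\|=|F|^{-1}|F\triangle gF|<\epsilon$. For the finitely many $X_n$ below the cutoff, define $\xi_x$ arbitrarily (e.g.\ $\xi_x=\delta_x$); since $d(X_n,X_n^c)\to\infty$, enlarging $S$ finitely handles the remaining components. This yields property $A$, and the diagram does the rest.

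For $(3)\Rightarrow(1)$, I argue the contrapositive. Suppose $\Gamma$ is not amenable; then there exist $R,\epsilon>0$ such that $|\partial_R E|\geq\epsilon|E|$ for every finite $E\subseteq\Gamma$. Given any candidate $S>0$ from the $ULA$ definition, choose $n$ so large that both $d(X_n,X_n^c)>S$ and the injectivity radius of $\Gamma\to X_n$ exceeds $S+R$, and put $F:=X_n$. Any $E\subseteq X$ with $\diam E\leq S$ that meets $F$ must be entirely contained in $X_n$; using the injectivity radius hypothesis, $E$ lifts bijectively to $\tilde E\subseteq\Gamma$, and the bijection extends to a bijection $\partial_R\tilde E\leftrightarrow\partial_RE$ (both sides sitting in an embedded ball). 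Non-amenability of $\Gamma$ then gives $|\partial_RE\cap F|=|\partial_RE|=|\partial_R\tilde E|\geq\epsilon|\tilde E|=\epsilon|E\cap F|$, contradicting $ULA$ for the parameters $(R,\epsilon)$.

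The main obstacle in both directions is the careful bookkeeping between $\Gamma$ and its finite quotients $X_n$: one must simultaneously arrange the quotient to be a local isometry (for bijective lifting) and the components $X_n$ to be metrically separated (so sets of bounded diameter cannot straddle components). Once the threshold $n$ is chosen large enough to achieve both, everything else reduces to routine transfer of the F\o lner or non-F\o lner estimate between $\Gamma$ and $X_n$.
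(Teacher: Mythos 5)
Your argument for the crucial direction coincides with the paper's: the paper also reduces everything to ``$X$ has $ULA$ $\Rightarrow$ $\Gamma$ amenable'' via diagram \eqref{diag}, and proves it by exactly your lifting mechanism (for $n$ large, a set $E_n\subseteq X_n$ of diameter $\leq S$ lifts isometrically to $\Gamma$ together with its $R$-neighbourhood, transferring the F\o lner estimate); you merely phrase it contrapositively. The only real divergence is in $(1)\Rightarrow(2)$: the paper simply cites Guentner--Roe \cite[Proposition 11.39]{Roe:2003rw} for ``$\Gamma$ amenable $\Rightarrow$ $X$ has $A$'', whereas you reprove it by pushing forward a F\o lner set along $\Gamma\to\Gamma/\Gamma_n$, which makes the proof self-contained at the cost of some bookkeeping. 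One detail in that reproof is wrong as stated: on the finitely many components $X_n$ below your injectivity-radius threshold you cannot take $\xi_x=\delta_x$, since two distinct points $x,y$ in such a component with $d(x,y)\leq R$ would give $\|\xi_x-\xi_y\|=2$, not $<\epsilon$; enlarging $S$ does not repair this. The standard fix is to let $\xi_x$ be the uniform probability measure on the (bounded) union of all the exceptional components for every $x$ in that union, which makes $\|\xi_x-\xi_y\|=0$ there and only costs a finite enlargement of $S$. With that correction, and with the trivial observation (which you use implicitly) that a set $E$ of diameter $\leq S$ meeting $F=X_n$ must lie entirely in $X_n$ once $d(X_n,X_n^c)>\max(S,R)$, your proof is complete and matches the paper's.
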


\begin{proof}
It is a theorem of Guentner--Roe \cite[Proposition 11.39]{Roe:2003rw} that amenability of $\Gamma$ implies property $A$ for $X$ (and hence, looking back at diagram \eqref{diag}, all of the other properties).  To complete the proof of the theorem, diagram \eqref{diag} implies that it suffices to prove that if $X$ has property $ULA$, then $\Gamma$ is amenable. 

Assume then that $\Gamma$ has $ULA$ and let $\epsilon>0$.  Then there exists $S$ such that for all $n$ there exists $E_n\subseteq X_n$ with $\diam(E_n)\leq S$ and $|\partial_1E_n|<\epsilon|E_n|$.  For $n$ suitably large, $E_n$ lifts isometrically to a finite subset $\widetilde{E}$ of $\Gamma$ such that $\partial_1(\widetilde{E})$ is also an isometric lift of $\partial_1E_n$.  In particular, $|\partial_1\widetilde{E}|<\epsilon|\widetilde{E}|$, which implies that $\Gamma$ is amenable.
\end{proof}

Combining our results with Sako's gives the result that in fact the properties $A$, $ULA_\mu$, $MSP$, $ONL$ are all equivalent for bounded geometry metric spaces.  It would be interesting to know if $ULA$ was also equivalent to the others in general.

\begin{cubesrem}\label{cubesrem}
Recall that Nowak \cite{Nowak:2007vn} has constructed $CE$ metric spaces without $A$, although without bounded geometry. Roughly, these are coarse disjoint unions of powers\footnote{Endowed with $l^1$-metric.} of a fixed finite group.  Nowak's spaces are spaces of graphs in our sense if one drops the bounded geometry assumption.

Note that the implications $MSP\implies ULA_\mu \implies ULA$ work for discrete metric spaces, even without assuming bounded geometry (cf.\ Remark \ref{bgrem} above). Observe that Nowak's examples do not have $ULA$ (whence also not $ULA_\mu$ or $MSP$). Indeed, $ULA$ for a coarse disjoint union of finite spaces $X_n$ implies that the pieces are amenable in a uniform way, i.e.\ that for a given $\epsilon>0$ there exists a F\o lner set $E_n\subseteq X_n$ for each $n$ such that $\diam(E_n)$ is uniformly bounded.   However, \cite[Theorem 4.4]{Nowak:2007vn} implies that this is not possible for Nowak's examples.
\end{cubesrem}


\section{Comments on the quantitative theory, and a theorem of Nowak}\label{quantsec}

In this section, we make a few brief comments on quantitative versions of the theory, concentrating on the relationship between this and established invariants in the case of amenable groups.  This enables us to recover (and slightly generalise) a result of Nowak: \cite[Theorem 6.1]{Nowak:2007ys}.  The section is just meant to give a flavour of what is possible; one could no doubt say rather more.  

We also include an unrelated comment on using quantitative properties to tell the difference between $CE$ and $A$.  

Throughout, we will focus on coarsely geodesic spaces as in the next definition.  The definition is slightly restrictive, but covers the motivating examples: graphs and finitely generated discrete groups.  On the other hand, it is easy to see that any bounded geometry metric space that is `quasi-geodesic' in any reasonable sense if quasi-isometric to one of this form.  Equivalently, any monogenic coarse structure can be metrised with a metric of this form: see for example \cite[Proposition 2.57]{Roe:2003rw}.

\begin{cg}\label{cg}
Let $(X,d)$ be a space.  $X$ is said to be \emph{coarsely geodesic} if the metric $d$ is integer valued, and if for any $n\in\N$, $x,y\in X$ we have that $d(x,y)=n$ if and only if there exists a sequence 
$x=x_0,x_1,...,x_n=y$ such that $d(x_i,x_{i+1})=1$ for all $i=0,...,n-1$.
\end{cg}

In our context `quantitative' means `measured by a given function'.  We will work with functions up to the following notions of order and equivalence.

\begin{equiva}\label{equiva}
Let $f,g:\N\to \N$ be functions.  We write $f\preceq g$ if there exist constants $c,d$ such that for all $n\in\N$, $f(n)\leq cg(dn)$.  We say that $f$ and $g$ are \emph{equivalent}, and write $f\sim g$ if $f\preceq g$ and $g \preceq f$.
\end{equiva}

The following is perhaps the most general `quantitative version' of one of the properties that we have studied.  Although it makes sense in general, it only seems to have much content in the case of coarsely geodesic spaces as above.

\begin{quantver}\label{quantver}
Let $X$ be a space, $c>0$ and $f:\N\to\N$ be a non-decreasing function.  

$X$ is said to have $WMSP(c,f)$ if for all $R\in\N$ and all finite subsets $F$ of $X$ there exists a subset $\Omega\subseteq F$ equipped with a decomposition 
$$
\Omega=\sqcup_i\Omega_i
$$
such that
\begin{itemize}
\item $|\Omega|\geq c|F|$;
\item for all $i$, $\diam(\Omega_i)\leq f(R)$;
\item for all $i\neq j$, $d(\Omega_i,\Omega_j)>R$.
\end{itemize}
\end{quantver}

The same proof as in Theorem \ref{mspb} above shows that $WMSP(c,f)$ is equivalent to an appropriate quantitative version of property $ULA$.  The results of Section \ref{mspsec} then show that $WMSP(c,f)$ (which should be thought of as standing for `weak metric sparsification property with respect to $c,f$') is implied by appropriate quantitative versions of $A$, $ULA_\mu$, $MSP$ and $ONL$; we leave the details to the reader.

Note that if $X,Y$ are quasi-isometric metric spaces, and $X$ has $WMSP(c,f)$, then $Y$ has $WMSP(c',g)$ for some $c'>0$ and $g$ with $g\sim f$.  The argument is standard and not directly relevant, so we omit it.

We will need the following two quantitative properties.  Our aim is to relate them to $WMSP(c,f)$, and thus to each other.

\begin{quantamen}\label{quantamen}
Let $X$ be an amenable space.  The \emph{isodiametric function} of $X$, $A_X:\N\to \N$ is defined by
$$
A_X(n):=\min\{\text{diam}(A)~|~A\subseteq X,~~|\partial_1(A)|\leq (1/n)|A|\}.
$$
\end{quantamen}

\begin{fad}\label{fad}
Let $X$ be a space.  Let $n$ be a natural number, and $\tau:\N\to \N$ be a non-decreasing function.  $X$ is said to have \emph{asymptotic dimension at most $n$} (with respect to $\tau$), in brief $X$ has $FAD(n,\tau)$, if for all $R\in\N$ there exist subsets $\Omega^1,...,\Omega^{n+1}$ of $X$ and decompositions $\Omega^i=\sqcup_{j\in I_i}\Omega^i_j$ such that
\begin{enumerate}
\item $X=\cup_{i=1}^{n+1} \Omega^i$;
\item for each $i=1,...,n+1$ and all $j_1\neq j_2$, $d(\Omega^i_{j_1},\Omega^i_{j_2})>R$;
\item for each $i=1,...,n+1$ and all $j$, $\text{diam}(\Omega^i_j)\leq \tau(R)$.
\end{enumerate}
\end{fad}

\begin{wmspamen}\label{wmspamen}
Let $X$ be an amenable space in the sense of Definition \ref{adef} above, and assume also that $X$ has $WMSP(c,f)$.  Then for any $R\in\N$ there exists a finite (non-empty) subset $E\subseteq X$ such that $\diam(E)\leq f(2R)$ and
$$
|N_R(E)|\leq\frac{2}{c}|E|.
$$
\end{wmspamen}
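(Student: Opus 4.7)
The plan is to combine amenability with $WMSP(c,f)$ via a counting/pigeonhole argument. Fix the given $R \in \N$. Using amenability (Definition \ref{adef}), choose a finite F\o lner set $F \subseteq X$ with $|\partial_R F| < \tfrac{1}{2}|F|$. Then apply $WMSP(c,f)$ to $F$ with parameter $2R$ (crucially, \emph{not} $R$): this produces a subset $\Omega \subseteq F$ together with a decomposition $\Omega = \sqcup_i \Omega_i$ such that $|\Omega| \geq c|F|$, each $\diam(\Omega_i) \leq f(2R)$, and $d(\Omega_i,\Omega_j) > 2R$ for $i\neq j$.

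The point of doubling the parameter is that the $R$-neighbourhoods $N_R(\Omega_i)$ are now pairwise disjoint: if some $x$ lay in both $N_R(\Omega_i)$ and $N_R(\Omega_j)$, then two points of $\Omega_i$ and $\Omega_j$ would be at distance at most $2R$, contradicting the separation. Since each $\Omega_i \subseteq F$, each $N_R(\Omega_i)$ is contained in $N_R(F) = F \cup \partial_R F$, and disjointness combined with the F\o lner property yields
$$
\sum_i |N_R(\Omega_i)| \;\leq\; |N_R(F)| \;=\; |F| + |\partial_R F| \;<\; \tfrac{3}{2}|F|.
$$

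A pigeonhole step then finishes the proof. Suppose for contradiction that $|N_R(\Omega_i)| > \tfrac{2}{c}|\Omega_i|$ for every index $i$. Summing and using $|\Omega| = \sum_i |\Omega_i| \geq c|F|$ would force $\sum_i |N_R(\Omega_i)| > 2|F|$, contradicting the display above. Hence some $\Omega_i$ satisfies $|N_R(\Omega_i)| \leq \tfrac{2}{c}|\Omega_i|$, and $E := \Omega_i$ is the required set (non-empty as a block of the partition, and with $\diam(E) \leq f(2R)$ built into $WMSP$). I expect no real obstacle: the only care needed is applying $WMSP$ with parameter $2R$ rather than $R$ so that the $R$-neighbourhoods of the pieces remain disjoint, and selecting the F\o lner parameter strictly below $1$ so that the pigeonhole inequality is strict.
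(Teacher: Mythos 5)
Your proof is correct and follows essentially the same route as the paper's: take a F\o lner set $F$ for the parameter $R$, apply $WMSP(c,f)$ to $F$ at scale $2R$ so that the $R$-neighbourhoods of the pieces are disjoint inside $N_R(F)=F\sqcup\partial_R F$, bound $\sum_i|N_R(\Omega_i)|$ by $|F|+|\partial_R F|$, and pigeonhole against $|\Omega|\geq c|F|$. The only cosmetic difference is that you demand $|\partial_R F|<\tfrac{1}{2}|F|$ where the paper's weaker choice $|\partial_R F|<|F|$ already suffices for the same conclusion.
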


\begin{proof}
As $X$ is amenable there exists a finite subset $F\subseteq X$ such that $|\partial_R F|< |F|$.  Let $\Omega\subseteq F$ be as given in Definition \ref{quantver} above with respect to the parameter $2R$; we may assume all the $\Omega_i$s are non-empty.  It follows from this that
\begin{align*}
\Big(\sum_i|N_R(\Omega_i)|\Big)-|\partial_RF| & \leq \sum_i (|N_R(\Omega_i)|-|N_R(\Omega_i)\cap \partial_R F|) \\ & =\sum_i |N_R(\Omega_i)\cap F|\leq |F|
\end{align*}
whence 
$$
\sum_i |N_R(\Omega_i)|\leq 2|F|
$$
by assumption on $\partial_RF$.  From this and the fact that $|\Omega|\geq c|F|$, we see that
\begin{equation}\label{basic}
\sum_i|N_R(\Omega_i)|\leq \frac{2}{c}\sum_i|\Omega_i|.
\end{equation}
Finally, there exists $i$ such that if $E:=\Omega_i$ then
$$
|N_R(E)|\leq \frac{2}{c}|E|
$$
and of course $\diam(E)\leq f(2R)$.
\end{proof}

\begin{amenthe}\label{amenthe}
Let $X$ be a coarsely geodesic amenable space with $WMSP(c,f)$.

Then $A_X\preceq f$.
\end{amenthe}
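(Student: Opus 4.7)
The plan is to upgrade the bound $|N_R(E)|\leq (2/c)|E|$ provided by Lemma \ref{wmspamen} into a scale-$1$ F\o lner bound $|\partial_1 E'|\leq (1/n)|E'|$, via a telescoping pigeonhole argument on the chain of nested neighborhoods $N_0(E)\subseteq N_1(E)\subseteq\cdots\subseteq N_R(E)$, exploiting that coarse geodesicity forces the equality $N_1(N_r(E))=N_{r+1}(E)$.

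Given $n\in\N$, I would first calibrate the scale: take $R=\lceil (n+1)\ln(2/c)\rceil$, which is $O(n)$. The elementary inequality $\ln(1+1/n)\geq 1/(n+1)$ then yields $(2/c)^{1/R}\leq 1+1/n$. Next, apply Lemma \ref{wmspamen} at scale $R$ to obtain a nonempty finite $E\subseteq X$ with $\diam(E)\leq f(2R)$ and $|N_R(E)|\leq(2/c)|E|$.

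The heart of the argument is the pigeonhole step. From the telescoping identity
$$\prod_{r=0}^{R-1}\frac{|N_{r+1}(E)|}{|N_r(E)|}=\frac{|N_R(E)|}{|E|}\leq\frac{2}{c},$$
some factor is bounded by the geometric mean, so there exists $r\in\{0,\ldots,R-1\}$ with $|N_{r+1}(E)|/|N_r(E)|\leq (2/c)^{1/R}\leq 1+1/n$. Because $X$ is coarsely geodesic, any point $y$ with $d(y,E)\leq r+1$ has a unit neighbor $z$ with $d(z,E)\leq r$ (take one step along an integer geodesic from $y$ to the nearest point of $E$), whence $N_1(N_r(E))=N_{r+1}(E)$. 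Setting $E':=N_r(E)$ gives $|\partial_1 E'|\leq (1/n)|E'|$ and $\diam(E')\leq f(2R)+2R$, so $A_X(n)\leq f(2Kn)+2Kn$ for a constant $K=K(c)$.

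The remaining task is to absorb the linear term. Applying $WMSP(c,f)$ to a geodesic segment $F=\{x_0,\ldots,x_{R+1}\}$: any two distinct pieces of the decomposition must be separated by more than $R$ positions on the segment, which for $R$ large forces the decomposition to consist of a single piece covering $\geq c(R+2)$ of $F$; that piece then has diameter at most $f(R)$, forcing $f(R)\gtrsim R$. Hence $n\preceq f$, and $A_X(n)\leq f(2Kn)+2Kn\preceq f(n)$, giving the claim. The main obstacle is the joint calibration: $R$ must grow linearly in $n$ to drive the geometric mean below $1+1/n$, yet the diameter bound $f(2R)+2R$ must remain $O(f(Dn))$; the coarsely geodesic hypothesis is used twice, to identify $N_1(N_r(E))$ with $N_{r+1}(E)$ and to force $f\gtrsim\mathrm{id}$, and these two uses are exactly what make the calibration work.
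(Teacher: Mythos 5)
Your proposal is correct and follows essentially the same route as the paper's proof: Lemma \ref{wmspamen}, the telescoping product over the nested neighbourhoods $N_r(E)$, a pigeonhole to find a level with $|N_1(N_r(E))|/|N_r(E)|\leq (2/c)^{1/R}$ (using coarse geodesicity to identify $N_1(N_r(E))$ with $N_{r+1}(E)$), and the calibration $R\sim n\log(2/c)$ together with $f\succeq \mathrm{id}$ to absorb the additive $O(R)$ term in the diameter. The only difference is cosmetic: you fix $R$ in terms of $n$ up front rather than solving for $n$ in terms of $R$, and you supply an explicit geodesic-segment argument for $f(R)\gtrsim R$ where the paper merely asserts $f(n)\geq n$ from unboundedness and coarse geodesicity.
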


\begin{proof}
We assume $X$ is unbounded, otherwise the result is trivial.  Fix for the moment $R\in\N$.  Using Lemma \ref{wmspamen}, we see there exists a finite subset $E\subseteq X$ such that $|N_R(E)|\leq \frac{1}{c}|E|$.  We may rewrite this inequality as
$$
\frac{|N_R(E)|}{|N_{R-1}(E)|}\cdots \frac{|N_2(E)|}{|N_1(E)|}\frac{|N_1(E)|}{|E|}\leq \frac{1}{c},
$$ 
whence (using the coarsely geodesic property) there exists $m=0,...,R-1$ such that if $A_R=N_m(E)$, then 
$$
\frac{|N_1(A_R)|}{|A_R|}\leq \Big(\frac{1}{c}\Big)^\frac{1}{R}.
$$ 
Rearranging this slightly gives
\begin{equation}\label{Ainq}
\frac{|\partial_1(A_R)|}{|A_R|}\leq \Big(\frac{2}{c}\Big)^{\frac{1}{R}}-1
\end{equation}
and $\text{diam}(A_R)\leq R+f(2R)$.  Note moreover that because $X$ is unbounded and coarsely geodesic we have that $f(n)\geq n$ for all $n\in\N$; in particular, then,
\begin{equation}\label{finq}
\text{diam}(A_R)\leq 2f(2R).
\end{equation}
Now, we have
$$
\frac{1}{n}\leq \Big(\frac{2}{c}\Big)^{\frac{1}{R}}-1~~ \Leftrightarrow ~~R\leq \frac{\log(2/c)}{\log(\frac{n+1}{n})},
$$
whence lines \eqref{Ainq} and \eqref{finq} together imply that
$$
A_X(n)\leq 2f\Big(\frac{2\log(2/c)}{\log(\frac{n+1}{n})}\Big).
$$
Finally, note that 
$$
\frac{1}{\log(\frac{n+1}{n})}\leq \frac{n}{\log(2)}
$$
for all $n\in\N$ (treating, as we may, the left hand side as zero when $n=0$) and $f$ is non-decreasing, whence
$$
A_X(n)\leq 2f\Big(\frac{2\log(2/c)n}{\log(2)}\Big)\preceq f(n)
$$
as required.
\end{proof}

The following theorem immediately implies \cite[Theorem 6.1]{Nowak:2007ys}.

\begin{nocor}\label{nocor}
Let $X$ be a coarsely geodesic, amenable space with $FAD(n,\tau)$.  Then $A_X\preceq \tau$.
\end{nocor}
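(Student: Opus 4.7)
The plan is to deduce the corollary from Theorem \ref{amenthe} by showing that $FAD(n,\tau)$ implies $WMSP(c,\tau)$ for a constant $c > 0$ depending only on $n$. The bulk of the work has already been done in Theorem \ref{amenthe}, so only a short pigeonhole argument should be required.

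First I would fix $R \in \N$ and a finite subset $F \subseteq X$. Using $FAD(n,\tau)$, obtain subsets $\Omega^1,\dots,\Omega^{n+1}$ with decompositions $\Omega^i = \sqcup_{j \in I_i} \Omega^i_j$ such that $X = \cup_i \Omega^i$, the pieces of each $\Omega^i$ are mutually at $R$-distance and have diameter bounded by $\tau(R)$. Since the $n+1$ sets $\Omega^i$ cover $X$ (and hence cover $F$), the pigeonhole principle gives some index $i$ with
$$
|\Omega^i \cap F| \geq \frac{1}{n+1}|F|.
$$
Setting $\Omega := \Omega^i \cap F$ with the induced decomposition $\Omega = \sqcup_{j}(\Omega^i_j \cap F)$ (discarding empty pieces), we obtain a decomposition satisfying the three bullets of Definition \ref{quantver} with constants $c = 1/(n+1)$ and $f = \tau$. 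Thus $X$ has $WMSP(1/(n+1),\tau)$.

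Applying Theorem \ref{amenthe} to $X$ with this data yields $A_X \preceq \tau$, which is the desired conclusion. The step I would expect to deserve the most care is the pigeonhole reduction, but this is genuinely routine; there is no real obstacle once one observes that the covering property in part (1) of Definition \ref{fad} is exactly what is needed to feed into the weak metric sparsification framework.
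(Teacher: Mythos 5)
Your proposal is correct and follows exactly the paper's route: the paper likewise deduces the corollary by observing that $FAD(n,\tau)$ implies $WMSP(\frac{1}{n+1},\tau)$ and then invoking Theorem \ref{amenthe}. Your pigeonhole argument is precisely the content of the step the paper leaves as ``easy to see.''
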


\begin{proof}
It is easy to see that $FAD(n,\tau)$ implies $WMSP(\frac{1}{n+1},\tau)$.  The corollary is thus immediate from Theorem \ref{amenthe}.
\end{proof}

We conclude this section with a remark on quantitative phenomena and coarse embeddings.

\begin{cerem}\label{cerem}
One may try to define a `quantitative negation' of $ULA_\mu$ as follows.

\vspace{.1in}
\noindent \emph{Let $X$ be a space and $f:\N\to\R_+$ a non-decreasing, non-zero function.  Then $X$ has $\neg ULA_\mu(f)$ if for all $R>0$ and all $S>0$ there exists a probability measure $\mu$ on $X$ such that for all subsets $E\subseteq \supp (\mu)$ with $\diam(E)\leq S$ we have
$$
\mu(\partial_R E)\geq f(R)\mu(E).
$$}
Now, inspection of the proof of Theorem \ref{osthe} above, and the ingredients for it from \cite{Ostrovskiil:2009kl}, shows that if $X$ does not have $CE$ then it has $\neg ULA_\mu(f)$ with $f$ \emph{growing at least linearly}.  On the other hand, for a space not to have $ULA_\mu$ it suffices to show that it has $\neg ULA_\mu(f)$  for any (non-zero, non-decreasing) $f$.  In particular, this gives a potential quantitative approach to finding more examples of spaces without $ULA_\mu$ (hence without $A$), but with $CE$.   

Note in this regard also that one easily sees that either of the classes of spaces in Theorem \ref{expthe} have $\neg ULA_\mu(f)$ with $f$ growing \emph{exponentially}.  In particular, the `quantitative method' sketched above is not sufficient to detect $CE$ for the main example in \cite{Arzhantseva:2011vn}.  We do not know if it is possible to find examples of spaces with $CE$, but without $A$, using the quantitative method above, but leave it as an open problem.
\end{cerem}

\subsection*{Acknowledgments}

Much of the work on this paper was done during visits of the fourth author to the other authors: he would like to thank the Universities of M\"unster and Southampton, and the other authors, for their kind hospitality.  

The fourth author would also like to thank Mikhail Ostrovskii for useful discussions, and especially for bringing the paper \cite{Ostrovskiil:2009kl} to his attention.  This took place during the Concentration Week on Non-Linear Geometry of Banach Spaces, Differentiability and Geometric Group Theory at Texas A\&M University; the fourth author would like to thank the oragnisers of that meeting for creating a very stimulating environment. \\

\noindent
\textsc{Mathematics, University of Southampton, Highfield, Southampton, SO17 1BJ, UK}\\
E-mail: \texttt{j.brodzki@soton.ac.uk}\\

\noindent
\textsc{Mathematics, University of Southampton, Highfield, Southampton, SO17 1BJ, UK}\\
E-mail: \texttt{g.a.niblo@soton.ac.uk}\\

\noindent
\textsc{Mathematisches Institut, Universit\"at M\"unster, Einsteinstr.\ 62, 48149 M\"unster, Germany}\\
E-mail: \texttt{jan.spakula@uni-muenster.de}\\

\noindent
\textsc{2565 McCarthy Mall, University of \Hawaii\ at \Manoa, Honolulu, HI 96822, USA}\\
E-mail: \texttt{rufus.willett@hawaii.edu}\\

\noindent
\textsc{Mathematics, University of Southampton, Highfield, Southampton, SO17 1BJ, UK}\\
E-mail: \texttt{n.j.wright@soton.ac.uk}

\bibliography{Generalbib}

\end{document}